\documentclass[11pt]{amsart}
\usepackage[arrow,matrix,curve,color]{xy}
\usepackage{pb-diagram,pb-xy}
\usepackage{graphicx}
\usepackage{amsfonts}
\usepackage{amsthm}
\usepackage{amsmath}
\usepackage{amssymb}
\usepackage{hyperref}   
\usepackage{enumitem}
\usepackage{tikz}

\theoremstyle{plain}
\newtheorem{theorem}{Theorem}[section]
\newtheorem{corollary}[theorem]{Corollary}
\newtheorem{lemma}[theorem]{Lemma}
\newtheorem{proposition}[theorem]{Proposition}
\theoremstyle{definition}
\newtheorem{remark}[theorem]{Remark}
\newtheorem{definition}[theorem]{Definition}
\newtheorem{example}[theorem]{Example}

\newtheorem{question}[theorem]{Question}

\newcommand{\CP}{\mathbb{CP}}

\newcommand{\p}{\partial}
\newcommand{\fb}{\mbox{-}\mathrm{fb}}
\newcommand{\spin}{\mathrm{spin}}
\newcommand{\spinc}{\mathrm{spin}^c}

\newcommand{\MSpin}{\mathsf{MSpin}}

\newcommand{\MPin}{\mathsf{MPin}}

\newcommand{\ko}{\mathsf{ko}}

\newcommand{\psc}{positive scalar curvature}
\newcommand{\co}{\colon\,}

\newcommand{\bR}{\mathbb R}
\newcommand{\bC}{\mathbb C}

\newcommand{\bF}{\mathbb F}
\newcommand{\bH}{\mathbb H}

\newcommand{\bZ}{\mathbb Z}

\newcommand{\bP}{\mathbb P}

\newcommand{\cL}{\mathcal L}

\newcommand{\cO}{\mathcal O}

\newcommand{\cR}{\mathcal R}

\newcommand{\SO}{\mathop{\rm SO}}
\newcommand{\SU}{\mathop{\rm SU}}
\newcommand{\U}{\mathop{\rm U}}

\newcommand{\Sp}{\mathop{\rm Sp}}
\newcommand{\Spin}{\mathop{\rm Spin}}
\newcommand{\Pin}{\mathop{\rm Pin}}

\newcommand{\PSp}{\mathop{\rm PSp}}

\newcommand{\tM}{\widetilde M}

\newcommand{\lp}{\textup{(}}
\newcommand{\rp}{\textup{)}}

\newcommand{\ind}{\operatorname{ind}}

\newcommand{\image}{\operatorname{image}}

\newcommand{\Dirac}{\partial\!\!\!/}
\newcommand{\Cl}{\mathcal{C}\ell}

\newcommand{\bord}{\rightsquigarrow} 

\begin{document}
\title[Positive scalar curvature on $\mathbf{Pin}^\pm$- and
  $\mathbf{Spin}^c$-manifolds]{Positive
  scalar curvature\\ on $\mathbf{Pin}^\pm$- and
  $\mathbf{Spin}^c$-manifolds \\ and manifolds
  with singularities} \author{Boris Botvinnik} \address{Department of
  Mathematics\\ University of Oregon\\ Eugene OR 97403-1222, USA}
\email[Boris Botvinnik]{botvinn@uoregon.edu, bbotvinn@gmail.com}
\urladdr{http://pages.uoregon.edu/botvinn/} \author{Jonathan
  Rosenberg} \address{Department of Mathematics\\ University of
  Maryland\\ College Park, MD 20742-4015, USA} \email[Jonathan
  Rosenberg]{jmr@math.umd.edu}
\urladdr{http://www2.math.umd.edu/\raisebox{-3pt}{~}jmr/}
\keywords{positive scalar curvature,
  pseudomanifold, singularity, bordism, transfer, $K$-theory, index}
\subjclass[2010]{Primary 53C21; Secondary 58J22, 53C27, 19L41, 55N22}
\begin{abstract}
It is well-known that spin structures and Dirac operators
play a crucial role in the study of positive scalar curvature metrics
(psc-metrics) on compact manifolds. Here we consider a class of non-spin
manifolds with ``almost spin'' structure, namely those with $\spin^c$ or
$\mathrm{Pin}^\pm$-structures.  It turns out that in those cases
(under natural assumptions on such a manifold $M$), the index of a
relevant Dirac operator completely controls existence of a psc-metric
which is $S^1$- or $C_2$-invariant near a ``special submanifold'' $B$
of $M$.  This submanifold $B\subset M$ is dual to the complex
(respectively, real) line bundle $L$ which determines
the $\spin^c$ or pin$^\pm$ structure on
$M$. We also show that these manifold pairs $(M,B)$ can be interpreted as
``manifolds with fibered singularities'' equipped with ``well-adapted
psc-metrics''. This survey is based on our recent work as well as on
our joint work with Paolo Piazza.
\end{abstract}
\maketitle

\section{Introduction}
\label{sec:intro}

\subsection{Motivation} 
It is well-know that from the view-point of differential geometry,
and especially problems involving scalar curvature,
there is a dramatic difference between spin manifolds and non-spin
manifolds. It is easy to check this condition: a smooth compact
oriented manifold $M$ admits a spin structure if and only if its second
Stiefel-Whitney class $w_2(M)=0$. In this paper we will be interested
in the question of when a manifold or pseudomanifold admits a Riemannian
metric of positive scalar curvature, which we abbreviate for
convenience to ``psc-metric.''
Now we recall:
\begin{theorem}[{\cite{MR577131}}]
\label{thm:GLnonspin}  
Let $M$ be a
compact non-spin simply-connected manifold with $\dim M=n\geq 5$.
Then $M$ admits a psc-metric.
\end{theorem}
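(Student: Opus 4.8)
The plan is to deduce Theorem~\ref{thm:GLnonspin} from the \emph{Surgery Theorem} of Gromov--Lawson and Schoen--Yau --- if a closed manifold carries a psc-metric and one performs a surgery on it in codimension $\geq 3$, the result again carries a psc-metric --- by reducing everything to a computation in oriented bordism. I would begin by recording the two consequences of the Surgery Theorem used throughout: a connected sum of two psc-manifolds of dimension $\geq 3$ admits a psc-metric (this is surgery on $S^0$), and a closed manifold obtained from a psc-manifold by finitely many codimension-$\geq 3$ surgeries admits one.

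The second step is to realize every oriented bordism class by a psc-manifold. Here one uses that $\Omega^{\SO}_*$ has a set of generators each of which is a complex projective space $\CP^{2k}$ or the total space of a smooth fiber bundle with projective-space fibers and compact structure group acting by isometries --- for instance the Dold manifolds, which fiber over real projective spaces with $\CP^{\mathrm{odd}}$ fibers and structure group $\bZ/2$ acting by complex conjugation. The Fubini--Study metric on $\CP^m$ has positive Ricci, hence positive scalar, curvature; and for a fiber bundle $F\to E\to B$ over a compact base whose structure group acts on $F$ by isometries, shrinking the fibers makes the large positive fiberwise scalar curvature dominate, so the total scalar curvature becomes positive once $F$ admits a psc-metric. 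Since a product of psc-manifolds is psc and psc is insensitive to orientation, every $\bZ$-linear combination of monomials in these generators is realized by a closed psc-manifold; joining its components and then killing $\pi_1$ by surgery on embedded circles (admissible, as $n\geq 5$) I may take this representative connected and simply-connected. Finally, the nontrivial linear $S^{n-2}$-bundle $Q^n$ over $S^2$ is simply-connected, carries a psc-metric (its structure group $\SO(n-1)$ acts isometrically on the round $S^{n-2}$), has $w_2(Q^n)\neq 0$, and is oriented null-bordant (all its Stiefel--Whitney and Pontryagin numbers vanish). So, picking a simply-connected psc-representative $N$ of $[M]\in\Omega^{\SO}_n$ and replacing it by $N\mathbin{\#}Q^n$ should $N$ happen to be spin, I obtain a simply-connected, \emph{non-spin} psc-manifold $M_1$ with $[M_1]=[M]$ in $\Omega^{\SO}_n$.

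It remains to transport the psc-metric from $M_1$ along an oriented bordism $W^{n+1}$ to the bordant manifold $M$, and this handle-trading step is the heart of the matter. After surgering the interior of $W$ so that $\pi_1(W)=1$, the simple-connectivity of $M$ and $M_1$ together with $\dim W=n+1\geq 6$ allow cancelling all handles of index $0$ and $1$ from either side, leaving a presentation of $W$ over $M_1$ with handles only of index $2,\dots,n-1$. The handles of index $2,\dots,n-2$ give admissible surgeries; the index-$(n-1)$ handles, which record the discrepancy between $H_2(M)$ and $H_2(M_1)$, do not, but read from the $M$-side they become index-$2$ handles --- surgeries on null-homotopic circles --- so the manifold reached from $M_1$ by the admissible handles has the form $M\mathbin{\#}Y$ with $Y$ a connected sum of copies of $S^2\times S^{n-2}$ and of the twisted bundle $S^2\mathbin{\widetilde{\times}}S^{n-2}$. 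Each of these two bundles carries a psc-metric of its own --- a product of round spheres, respectively an $S^{n-2}$-bundle over $S^2$ --- and each can be split off $M\mathbin{\#}Y$ by an admissible surgery once $M$ is non-spin: in the twisted case after a classical Wall-type diffeomorphism $M\mathbin{\#}(S^2\mathbin{\widetilde{\times}}S^{n-2})\cong M\mathbin{\#}(S^2\times S^{n-2})$, and then by surgery on the $S^2$-factor of $S^2\times S^{n-2}$, which is codimension $n-2\geq 3$. Thus $M$ is obtained from $M_1$ by admissible surgeries, and the Surgery Theorem delivers the psc-metric on $M$. The genuinely hard point, and the one place where both $n\geq 5$ and the non-spin hypothesis are indispensable, is precisely this handle-trading; the rest is formal once the Surgery Theorem and the bordism generators are in hand.
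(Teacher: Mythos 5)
The paper does not actually prove Theorem \ref{thm:GLnonspin}: it is quoted from Gromov--Lawson \cite{MR577131}, so your proposal can only be measured against that original argument, and in substance it coincides with it --- the Surgery Theorem, psc representatives for generators of $\Omega^{\SO}_*$ (projective spaces and projective-space bundles), and a handle analysis of an oriented bordism from a simply connected psc representative to $M$, with the non-spin hypothesis entering exactly where Gromov--Lawson use it, namely to absorb the $\bZ/2$ framing ambiguity of the $2$-handles. The one genuine difference is organizational: the standard write-up surgers the \emph{interior} of the bordism $W$ (correcting the normal bundle of a $2$-sphere in $W$ by tubing with a sphere in $M$ on which $w_2\neq 0$) so that $M\hookrightarrow W$ becomes $2$-connected and $M$ is then reached from the psc end by codimension $\geq 3$ surgeries alone; you instead leave $W$ alone, identify the intermediate manifold as $M\# Y$ with $Y$ a connected sum of linear $S^{n-2}$-bundles over $S^2$, and peel these summands off by admissible surgeries, invoking $M\#(S^2\mathbin{\widetilde{\times}}S^{n-2})\cong M\#(S^2\times S^{n-2})$ for non-spin simply connected $M$. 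That diffeomorphism is a true classical fact (for $n\geq 5$ it follows because $\pi_1$ of the oriented frame bundle of $M$ vanishes when $w_2$ is nonzero on a spherical class, so the two framed null-homotopic circles are ambiently isotopic), and it encodes the same use of non-spinness as the normal-bundle correction in the standard proof; so your route is sound, trading the bordism-improvement lemma for this connected-sum lemma. One small citation-level quibble: the psc generators of $\Omega^{\SO}_*$ are better attributed to Wall (even complex projective spaces, Milnor hypersurfaces, and the Wall manifolds for the $2$-torsion) --- Dold manifolds as such generate \emph{unoriented} bordism --- but all of these are projective-space bundles with compact isometric structure group, so the fact you need, that every class in $\Omega^{\SO}_n$ has a closed psc representative, stands.
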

The situation for spin manifolds is completely different, and
in the simply connected case can be expressed as follows:
\begin{theorem}[{\cite{MR577131,MR1189863}}]
\label{thm:Stolz}  
Let $M$ be a
compact spin simply-connected manifold with $\dim M=n\geq 5$.
Then $M$ admits a psc-metric if and only if $\alpha(M)=0$ in
$KO_n$.
\end{theorem}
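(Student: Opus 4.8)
This is the classical theorem of Gromov--Lawson and Stolz, proved by two entirely different arguments. \emph{For the forward implication (obstruction):} since $M$ is spin, the Atiyah--Bott--Shapiro construction equips it with a $\Cl_n$-linear Dirac operator whose index is, by definition, $\alpha(M)\in KO_n$; this is the homomorphism $\alpha\co\Omega^{\spin}_*\to KO_*$ induced by the Atiyah--Bott--Shapiro orientation of spectra, and in particular it is a spin-bordism invariant. If $g$ is a psc-metric on $M$, the Schr\"odinger--Lichnerowicz identity $\Dirac^2=\nabla^*\nabla+\tfrac14\,s_g$ with $s_g>0$ forces the Dirac operator of $(M,g)$ to have vanishing kernel, so its index is $0$; this also handles the dimensions $n\equiv 1,2\pmod 8$, where $KO_n=\mathbb Z/2$ and $\alpha$ is the mod $2$ index. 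Hence $\alpha(M)=0$.

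\emph{For the converse (construction),} I would assemble the statement from three ingredients. (1) \textbf{Surgery stability} (Gromov--Lawson, Schoen--Yau): a psc-metric on a closed manifold can be carried across any surgery of codimension $\ge 3$. Together with handle theory this yields the \emph{psc-bordism principle}: if $N^n$ and $M^n$ are closed, connected, simply connected spin manifolds with $n\ge5$ representing the same class in $\Omega^{\spin}_n$, and $N$ carries a psc-metric, then so does $M$. Indeed, choose a spin bordism $W$ from $N$ to $M$, make $W$ connected, kill $\pi_1(W)$ by surgery on interior loops, and trade handles so that, viewed as built on $N$, $W$ has only handles of index $\le n-2$ --- the handles of index $n-1,n,n+1$ on the $N$-side are removable because $M$ and $W$ are connected and simply connected, and here $n\ge5$ is used to trade $2$-handles up to $3$-handles on the dual side. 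Then crossing each critical level either adjoins a round $S^n$ (index $0$) or performs a codimension-$\ge3$ surgery (index $1$ through $n-2$), so positive scalar curvature propagates from $N$ to $M$; in particular, for simply connected spin manifolds of dimension $\ge5$, admitting a psc-metric depends only on the spin-bordism class. (2) \textbf{The geometry of $\HP^2$-bundles}: the symmetric metric on $\HP^2=\Sp(3)/(\Sp(2)\times\Sp(1))$ has positive sectional --- hence positive scalar --- curvature and is $\PSp(3)$-invariant, so the total space of any fiber bundle with fiber $\HP^2$ and structure group $\PSp(3)$ carries a psc-metric, obtained from a submersion metric by shrinking the fibers; by surgery on loops and connected sums, which preserve positive scalar curvature and the spin-bordism class in dimensions $\ge5$, such a total space may be taken connected and simply connected. (3) \textbf{Stolz's bordism theorem}: for every $n$, the kernel of $\alpha\co\Omega^{\spin}_n\to KO_n$ is generated, as an abelian group, by the spin-bordism classes of total spaces of such $\HP^2$-bundles.

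Granting (1)--(3), the converse follows: if $\alpha(M)=0$, then by (3) the class $[M]$ is an integer combination of classes of $\HP^2$-bundle total spaces, so --- using (2) together with the fact that disjoint unions and orientation reversals of psc-manifolds are again psc --- there is a closed, connected, simply connected spin $n$-manifold $N$ with $[N]=[M]$ in $\Omega^{\spin}_n$ admitting a psc-metric, and then the psc-bordism principle of (1) transfers the metric to $M$.

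\textbf{The main obstacle} is (3), Stolz's theorem: it is the deep step and admits no geometric shortcut. Its proof is a substantial computation in stable homotopy theory --- one studies the map of spectra $\MSpin\to\ko$, determines $\ker\alpha$ by Adams-spectral-sequence arguments and an analysis of $\MSpin$ and $\ko$ as module spectra, and produces the necessary kernel classes via a transfer argument applied to the universal $\HP^2$-bundle. The content is precisely that the geometrically evident families of psc-manifolds --- spheres, products, and $\HP^2$-bundles --- already exhaust the kernel of the index.
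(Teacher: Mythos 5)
Your proposal is correct and is exactly the standard Gromov--Lawson/Stolz argument that the paper itself relies on: the paper states this theorem as background with citations to \cite{MR577131,MR1189863} rather than proving it, and the same machinery you invoke (the Lichnerowicz obstruction, the surgery/bordism principle, and the splitting of $\MSpin$ localized with the $\bH\bP^2$-bundle transfer detecting $\ker\alpha$) is precisely what the authors reuse later, e.g.\ in the proof of their Theorem on pin$^\pm$ manifolds. Your identification of Stolz's computation of $\ker\alpha$ as the genuinely deep, non-geometric step is also accurate.
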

Here $KO_n$ is the $n$-the coefficient group for real $K$-theory,
which by the Bott periodicity theorem is given by $\bZ$ for $n$
divisible by $4$, by $\bZ/2$ if $n\equiv 1\text{ or }2\mod 8$,
and is $0$ otherwise.  The invariant $\alpha(M)$ is Atiyah's
$\alpha$-invariant, which can be identified with the $KO_n$-valued
index of the $\Cl_n$-linear Dirac operator defined by the spin structure,
and the map $\alpha$ passes to a \emph{surjective} homomorphism
$\Omega^\spin_n\to KO_n$.  Thus there are plenty of simply connected
spin manifolds that \emph{do not} admit a psc-metric.

We consider two classes of manifolds which are non-spin, but in some sense
are very close to be spin, namely, $\mathrm{pin}^\pm$-manifolds and
$\spinc$-manifolds which
are not spin. These conditions are easy to verify: if $M$ is
not orientable but its orientable double cover has a spin structure,
then $M$ has a $\mathrm{pin}^\pm$-structure. If $M$ is orientable
and $w_2(M)\neq 0$, then $M$ has a $\spinc$-structure
exactly when there is a class $c\in H^2(M;\bZ)$
which maps to $w_2(M)$ under the mod-$2$ reduction map
\begin{equation*}
H^2(M;\bZ)\to H^2(M;\bZ/2).
\end{equation*}
The class $c$ gives a map $c\co M \to \CP^{\infty}$ and,
consequently, a complex line bundle $L \to M$. We use the notation
$(M,L)$ for a manifold with a choice of $\spinc$-structure.

Let $(M,L)$ be $\spinc$-manifold. Since
$M$ is finite-dimensional, the image of the map
$c\co M \to  \CP^{\infty}$ is contained in $\CP^k\subset \CP^{\infty}$
for some $k$. Suppose the map $c$ is transverse to $\CP^{k-1}$, and
let $B= c^{-1}(\CP^{k-1})$:
\begin{equation*}
\begin{diagram}
\setlength{\dgARROWLENGTH}{1.7em}
\node{M}
\arrow{e,l}{c}
\node{\CP^{k}}
\\
\node{B} 
\arrow{n,r,J}{\mathrm{inclusion}}
\arrow{e,l}{c|_{B}}
\node{\CP^{k-1}} 
\arrow{n,r,J}{\mathrm{inclusion}}
\end{diagram}
\end{equation*}
In particular, $L|_B \to B$ is the normal bundle of the inclusion
$B\hookrightarrow M$. The submanifold $B$ is \emph{dual to $L\to M$}.

We denote by $N(B)$ a tubular neighborhood of $B\hookrightarrow M$. Then
$M$ is decomposed as
\begin{equation*}
M = X\cup_{\p X} - N(B),
\end{equation*}
where $X$ is the closure of $M\setminus N(B)$. Here $X$ is a spin
manifold with boundary $\p X=\p N(B)$ and we have a principal
$S^1$-bundle $\p X=\p N(B)\to B$.  In particular, the boundary 
$\p N(B)$ has a natural free $S^1$-action, and $N(B)$ can be identified
with the unit disk bundle of $L$.
\begin{definition}
  \label{def:adaptedmetric}
  A Riemannian metric $g$ on a $\spinc$-manifold $(M,L)$ is called
  \emph{well-adapted} if the restriction $g|_{N(B)}$ is
  $S^1$-invariant (where $M = X\cup_{\p X} - N(B)$) and if the
  the metric on $X$ is a product metric in a collar neighborhood
  of $\p X$.
\end{definition}
The case of manifolds $M$ which are not spin but which have a double
cover which is spin is also closely analogous.  In this case,
the double cover is classified by a map $c\co M \to  \bR\bP^{\infty}$
which we can take to land in some $\bR\bP^k\subset \bR\bP^{\infty}$
and to be transverse to $\bR\bP^{k-1}$,
and $c$ defines a real line bundle $L$ over $M$.  This time there is
a submanifold $B$ of $M$ of codimension $1$ and a decomposition of
$M$ as $M = X\cup_{\p X} N(B)$, and the submanifold $B$ is dual to
the real line bundle $L$.  In this case we have a principal
$C_2$-bundle $\p X=\p N(B)\to B$, with $C_2=\{\pm 1\}$, and $N(B)$ is
again the unit disk bundle of $L$ (except that the ``disks'' are
one-dimensional, and can be identified with $[-1, 1]$). The parallel
to Definition \ref{def:adaptedmetric} is:
\begin{definition}
  \label{def:adaptedmetric1}
  A Riemannian metric $g$ on a nonspin manifold $M$ with spin double
  cover and associated real line bundle $L$ as above is
  \emph{well-adapted} if the restriction $g|_{N(B)}$ is
  $C_2$-invariant (where $M = X\cup_{\p X} N(B)$) and if the
  metric on $X$ is a product metric in a collar neighborhood
  of $\p X$.
\end{definition}

Here is the question we are interested in:
\begin{question}
\label{q:adpatedpsc}  
  Suppose we have $(M,L)$, where $L$ is a complex or real line
  bundle on $M$, a nonspin manifold which is $\spinc$ in the
  complex case or has a spin double cover in the real case. Under what
  conditions does there exist a well-adapted psc-metric $g$ on $(M,L)$?
\end{question}
We will study this under the simplifying assumptions that
  $X$ and $\p X$ are simply connected.

\subsection{Plan}
The plan for this survey is fairly straightforward.  In Section
\ref{sec:pin}, we discuss the case of Question \ref{q:adpatedpsc}
when $M$ has a spin double cover.  This actually involves two
separate questions.  The first is more basic: if $M$ is closed and nonspin
but has a simply connected spin double cover, what is the necessary
and sufficient condition for $M$ to admit a Riemannian metric
of positive scalar curvature?  We answer this question, some special
cases of which had previously been treated in \cite{MR1688827,MR1694601},
completely in dimensions $5$ and up.  Then we go on to ask the
more refined question of when such a psc-metric on $M$ can be taken
to be well-adapted.  The main result on this, which answers the
question completely, is Theorem \ref{thm:mainC2}.

In Section \ref{sec:spinc}, we discuss the $\spinc$ case
of Question \ref{q:adpatedpsc}.  This involves several new
considerations: a new application of $\spinc$ index theory,
an analysis of a twisted version of positive scalar curvature,
where the scalar curvature is perturbed by the curvature of a line
bundle, and a study of new transfer map involving $\bC\bP^2$-bundles.
This latter map had been studied in part before by F{\"u}hring
\cite{Fuhring2}, but with a different application in mind.
The most interesting results of this section are
Theorem \ref{cor:nonspinpsc2} and Theorem \ref{spinc-well-adap}.

In the final section, Section \ref{sec:sing}, we discuss
a more general framework of ``manifolds with singularities''
(really, compact pseudomanifolds with two strata), and when they
admit well-adapted psc-metrics.  This section is based on
recent joint work of the authors with Paolo Piazza.

\section{$C_2$-Bundles and $\mathbf{Pin}^\pm$-Manifolds}
\label{sec:pin}

We begin with the case of $C_2$-bundles, which is slightly less
complicated than the case of $S^1$-bundles.  In this section we will
be interested in the study of manifolds $M^n$
which are not spin, but which have a spin double cover $\tM$,
which we will assume for simplicity to be simply connected.

The orthogonal group $\mathrm{O}(n)$ has two connected components, the
connected component of the identity being $\SO(n)$.  The double cover
of $\SO(n)$ (which is also the universal cover when $n\ge 3$) is $\Spin(n)$.
But there are two ways to complete the commuting diagram of group
extensions
\[
\xymatrix{1\ar@{=}[d]\ar[r]& \bZ/2\ar@{=}[d]\ar[r] &\Spin(n)
  \ar@{^{(}->}[d]\ar@{->>}[r] &\SO(n)\ar@{^{(}->}[d]\ar[r] & 1\ar@{=}[d]\\
  1\ar[r]& \bZ/2\ar[r] & G
  \ar@{->>}[r] &\mathrm{O}(n)\ar[r] & \,1.}
\]
(This can be seen, for example, using the inflation-restriction
sequence in cohomology of the split group extension
\[
\xymatrix{1\ar[r] &\SO(n) \ar[r]& \mathrm{O}(n)
  \ar[r] &C_2 \ar@/_/[l]\ar[r] & 1}
\]
with Borel cochains for the coefficient
group $\bZ/2$ \cite[Chapter I]{MR171880}.)
The two possibilities for $G$ are called $\Pin^+(n)$ and $\Pin^-(n)$
(they are described explicitly in terms of Clifford algebras
in \cite[Chapter I, \S2]{lawson89:_spin}),
and a lifting of the orthogonal frame bundle to a principal bundle
for $\Pin^\pm$ is called a pin$^\pm$ structure on a manifold
\cite{Pin}. Such structures can exist even when a manifold is non-orientable,
and they still make it possible to do some aspects of spinor geometry
without an orientation.

Suppose $M^n$ is not spin and has a simply connected double cover $\tM$.
Then up to homotopy, we have a fibration
$\tM\xrightarrow{p} M\xrightarrow{c} \bR\bP^\infty$, where $c$ is the
classifying map of the covering map $p$. Since $\tM$ is simply
connected, the Serre spectral sequence (if $\pi_1(\bR\bP^\infty)\cong C_2$
acts trivially on the low dimensional
cohomology of $\tM$) gives an exact sequence
\begin{equation}
\label{eq:Serre}  
\begin{array}{l}
  \!\!\!\!
  0\!\to \!H^2(\bR\bP^\infty,\!\bZ/2)\!\xrightarrow{c^*}\! H^2(M,\!\bZ/2)\!
\xrightarrow{p^*}\! H^2(\tM,\!\bZ/2)\! \xrightarrow{d_3}\!
H^3(\bR\bP^\infty,\bZ/2)
\end{array}
\end{equation}
\begin{remark}
\label{rem:pinvariants}  
There are now various subcases to consider:
\begin{enumerate}
\item $M$ is orientable, so that $w_1(M)=0$.  Since we are assuming that
  $M$ is not spin, we have $w_2(M)\ne 0$, while $p^*w_2(M)=w_2(\tM)=0$.
  Here $c$ is the classifying map for the universal cover of $M$, and
  by \eqref{eq:Serre}, $w_2(M)$ is pulled back from the
  generator of $H^2(\bR\bP^\infty,\bZ/2)$ under $c^*$.
\item $M$ is not orientable, so that $w_1(M)\ne 0$ and the generator of
  $$H^1(\bR\bP^\infty,\bZ/2)$$ pulls back under $c^*$ to $w_1(M)$. Since
  $p^*w_2(M)=w_2(\tM)=0$, either $w_2(M)=0$, in which case $M$ admits
  a pin$^+$-structure, or else $w_2(M)\ne 0$ but $w_2(M)$ comes
  from $M\xrightarrow{c} \bR\bP^\infty$, i.e., $w_2(M)=w_1(M)^2$,
  in which case $M$ admits a pin$^-$-structure \cite{MR1171915,MR1069818,Pin}.
  pin$^-$ and pin$^+$-structures are also known (cf.
  \cite{MR261613,MR321123}) as Pin and Pin$'$ structures, respectively.
\end{enumerate}
The various cases are illustrated by real projective spaces $\bR\bP^n$.
These are orientable for $n$ odd and non-orientable for $n$ even.
$\bR\bP^n$ is spin when $n\equiv3\mod 4$, orientable but non-spin
when $n\equiv1\mod 4$, pin$^+$ when $n\equiv0\mod 4$,
pin$^-$ when $n\equiv2\mod 4$.
\end{remark}

The first step is to study obstructions to psc-metrics in these
various cases.  Here is the basic result.  This fact was
conjectured before \cite[Conjecture 2.3.1]{MR3999671}
but we don't know of a complete proof in the literature,
although one special case (the pin$^-$-case with $n\equiv2\mod 4$)
is in \cite[Theorem 6.3]{MR1694601}.
\begin{theorem}
\label{thm:pin}
Let $M^n$ be a closed pin$^+$ or pin$^-$ manifold with
fundamental group $\bZ/2$ and spin universal cover $\tM$. Fix a Riemannian
metric on $M$ and let $L$ be the determinant line bundle of the orthogonal
frame bundle.  {\lp}So $w_1(L)=w_1(M)$.{\rp} In the pin$^-$ case,
$TM\oplus L$ admits a spin structure, and we obtain an associated
$\Cl_{n+1}$-linear Dirac operator $\Dirac_-$
with index $\alpha^-(M)$ in $KO_{n+1}$. In the
pin$^+$ case, $TM\oplus L\oplus L\oplus L$ admits a spin structure, and we
obtain an associated $\Cl_{n+3}$-linear Dirac operator $\Dirac_+$ with index
$\alpha^+(M)$ in $KO_{n+3}$. These indices are obstructions to existence
of a psc-metric on $M$.

Assume that $n\ge 5$, that $\alpha(\tM)=0$ in $KO_n$ and that
$\alpha^-(M)=0$ in $KO_{n+1}$ in the pin$^-$ case, and that
$\alpha^+(M)=0$ in $KO_{n+3}$ in the pin$^+$ case
Then $M$ admits a metric of positive scalar curvature.
\end{theorem}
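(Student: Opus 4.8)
The plan is to establish the two halves of the theorem separately: that the listed indices obstruct positive scalar curvature (a Bochner argument), and that their vanishing is sufficient (a bordism argument in the spirit of Gromov--Lawson and Stolz). First observe that such an $M$ is necessarily non-orientable, since a non-spin $\mathrm{pin}^\pm$-manifold has $w_1(M)\neq 0$ (otherwise $w_2(M)=0$ and $M$ is spin); hence $\tM$ is the orientation double cover and $\bZ/2$ acts freely on it. For the obstruction, the spin structures are produced by Stiefel--Whitney computations using $w_1(L)=w_1(M)$: one gets $w_1(TM\oplus L)=0$ and $w_2(TM\oplus L)=w_2(M)+w_1(M)^2$, which vanishes in the $\mathrm{pin}^-$ case, and $w_1(TM\oplus L^{\oplus 3})=0$, $w_2(TM\oplus L^{\oplus 3})=w_2(M)=0$ in the $\mathrm{pin}^+$ case, and this defines $\Dirac_\pm$. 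Since the structure group of $L$ is the discrete group $\{\pm 1\}$, $L$ carries a canonical flat connection; combining it with the Levi--Civita connection, the Weitzenb\"ock curvature term of $\Dirac_\pm$ is $\tfrac14\,\mathrm{scal}_M$, exactly as in the spin case. A psc-metric therefore makes $\Dirac_\pm$ invertible, so its kernel as a $\Cl_{n+1}$- resp.\ $\Cl_{n+3}$-module vanishes and $\alpha^\pm(M)=0$; the same argument on $\tM$ gives $\alpha(\tM)=0$. As these indices do not depend on the metric, they are genuine obstructions.

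For the converse I would first pass from geometry to bordism. By the Gromov--Lawson--Schoen--Yau surgery theorem in its bordism-invariant form (Gajer; Rosenberg--Stolz), for $n\geq 5$ the existence of a psc-metric on a $\mathrm{pin}^\pm$-manifold $M$ with $\pi_1(M)=\bZ/2$ and spin universal cover depends only on its class $[M,c]\in\Omega^{\mathrm{pin}^\pm}_n(B\bZ/2)$, the map $c\co M\to B\bZ/2=\bR\bP^\infty$ being the one classifying the universal cover. Indeed, any two such manifolds in a single bordism class are joined by a bordism which, after surgery in its interior, is $2$-connected relative to both boundary components, so each end is obtained from the other by surgeries of codimension $\geq 3$, across which positive scalar curvature propagates; and surgery below the middle dimension yields, in every bordism class, a representative of this kind without destroying the identities $w_1=c^*(\text{generator})$ and $w_2=w_1^2$ (resp.\ $w_2=0$). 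Thus it suffices to show that, under $\alpha(\tM)=0=\alpha^\pm(M)$, the class $[M,c]$ lies in the subgroup of $\Omega^{\mathrm{pin}^\pm}_n(B\bZ/2)$ generated by $\mathrm{pin}^\pm$-manifolds admitting psc-metrics.

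The substantive step is this bordism statement. Via Pontryagin--Thom, the $\mathrm{pin}^\pm$-structure presents $\Omega^{\mathrm{pin}^\pm}_*(\bR\bP^\infty)$ as a degree-shifted spin bordism group of a (twisted, stunted) real projective spectrum, and in particular as a module over $\Omega^{\spin}_*$; equivalently, in terms of the decomposition $M=X\cup_{\p X}N(B)$ of the introduction, the class of $M$ is assembled from spin data on $X$ and on the double cover $\p X\to B$. I would then argue in two layers. First, the ``covering'' content is supplied by the simply connected theorems already cited: by Theorem \ref{thm:Stolz} the cover $\tM$ (of dimension $n\geq 5$) carries a psc-metric as soon as $\alpha(\tM)=0$, while by Theorem \ref{thm:GLnonspin} a simply connected non-spin building block of dimension $\geq 5$ always does; an equivariant Gromov--Lawson construction then descends such metrics across the free $\bZ/2$-action to psc $\mathrm{pin}^\pm$-manifolds realizing the corresponding bordism classes. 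Second, one chooses an explicit $\Omega^{\spin}_*$-module generating set for $\Omega^{\mathrm{pin}^\pm}_*(\bR\bP^\infty)$ (low-dimensional projective spaces and circles with their $\mathrm{pin}^\pm$-structures, together with the standard Bott manifold); since the product of a psc-manifold with any closed manifold is again psc, working modulo the classes already realized reduces the question to the $KO$-theoretic invariants of the remaining generators, on which the only classes without a psc-metric are exactly those detected either by $\alpha$ on the double cover or by the Clifford-linear index $\alpha^\pm$. The key input here is the compatibility of $\alpha^\pm(M)$ with $\alpha(\tM)$, i.e.\ that $(\alpha,\alpha^\pm)$ is precisely the $KO$-theoretic completion of the relevant summand of $\mathrm{pin}^\pm$-bordism (equivalently, since Baum--Connes holds trivially for the finite group $\bZ/2$, one transports Stolz's identification of the psc-obstruction along the $\mathrm{pin}^\pm\leftrightarrow$ twisted-spin isomorphism). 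Granting this, $\alpha(\tM)=0=\alpha^\pm(M)$ forces $[M,c]$ to be a sum of psc classes, and $M$ admits a psc-metric.

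I expect the last layer to be the main obstacle: producing a workable $\Omega^{\spin}_*$-module generating set for $\Omega^{\mathrm{pin}^\pm}_*(\bR\bP^\infty)$ and verifying that, modulo products with higher-dimensional psc-manifolds, the classes carrying no psc-metric are exactly those seen by $(\alpha,\alpha^\pm)$. A secondary technical point is to ensure that the equivariant Gromov--Lawson surgeries used to push psc-metrics from $\tM$ down to $M$ respect the $\mathrm{pin}^\pm$-structure, i.e.\ the spin structure on $TM\oplus L^{\oplus j}$.
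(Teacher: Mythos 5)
Your necessity argument (flat connection on $L$, Lichnerowicz identity, vanishing of $\alpha^\pm(M)$ and $\alpha(\tM)$) is exactly the paper's, and your first reduction — invoke the surgery/bordism theorem for $n\ge 5$ so that only the bordism class of $M$ matters — is also the paper's first step. But the sufficiency half has a genuine gap, in two places. First, your ``first layer'' is not sound: a psc-metric on $\tM$ supplied by Stolz's theorem cannot be ``descended across the free $\bZ/2$-action'' by any equivariant Gromov--Lawson construction, because only invariant metrics descend and producing an invariant one is precisely the problem being solved; indeed the whole point of the theorem is that $\alpha(\tM)=0$ is \emph{not} sufficient for psc on $M$ (the extra invariants $\alpha^\pm$ intervene), so any argument that pushes metrics down from the universal cover using only $\alpha(\tM)=0$ would prove too much.

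Second, the step you yourself flag as ``the main obstacle'' is exactly where all the content lies, and the module-generating-set scheme does not supply it. The paper's proof rests on two specific inputs you do not have: (a) the $2$-local Stolz splitting $\MSpin\simeq \ko\vee(\image T)$, where $T$ is the $\bH\bP^2$-bundle transfer on $\Sigma^8 B\PSp(3)_+$, which after smashing with $\bR\bP^\infty$ (pin$^-$) or $M(3)$ (pin$^+$) shows that the classes coming from $\image T$ are represented geometrically by $\bH\bP^2$-bundles over lower-dimensional pin manifolds (the dualized line bundle must pull back from the base since $\bH\bP^2$ is simply connected), hence carry psc; and (b) the Anderson--Brown--Peterson/Giambalvo computation of $\pi_*(\ko\wedge\bR\bP^\infty)$ and $\pi_*(\ko\wedge M(3))$, which identifies the remaining homotopy as cyclic summands generated by even real projective spaces (psc) together with $\bZ/2$'s detected exactly by $\alpha^\mp$ in the appropriate residues mod $8$. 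Your substitute — an $\Omega^{\spin}_*$-module generating set plus ``product of a psc manifold with anything is psc'' — does not reach the $\image T$ classes (they are twisted bundle total spaces, not products of generators with spin manifolds), and it says nothing about products of a non-psc generator with a non-psc spin manifold whose combined index happens to vanish (e.g.\ an $\eta$-type class times a $K3$), which is precisely the kind of class the completeness claim for $(\alpha,\alpha^\pm)$ must handle; asserting that ``$(\alpha,\alpha^\pm)$ is precisely the $KO$-theoretic completion of the relevant summand'' is a restatement of what must be proved, not a proof.
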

\begin{proof}
  Since $L$ has a canonical flat connection, the Lichnerowicz identity
  $\Dirac_\pm^2=\nabla^*\nabla + \frac{\kappa}{4}$, where $\kappa$ is
  the scalar curvature, holds just as for the Dirac operator on a spin
  manifold, and so if $\kappa>0$, the spectrum of $\Dirac_\pm$ is bounded
  away from $0$ and the indices $\alpha^\pm(M)$ must vanish. Similarly,
  if $M$ admits a psc-metric, then so does $\tM$, and so $\alpha(\tM)=0$.
  This takes care of the necessity.

  For sufficiency, we use the Bordism Theorem \cite[Theorem 2.1.1]{MR3999671},
  which in this case boils down to the statement that it is sufficient
  to check that every class in $\Omega_n^{\Pin^\pm}$, $n\ge 5$, for which the
  necessary conditions hold has a representative of {\psc}.  The relevant
  bordism groups might as well be localized at $2$ since by
  \cite[Corollary 4 to Theorem 3]{MR1069818},
  the group $\Omega_*^{\Pin^\pm}$ are $2$-primary torsion, and divided
  by the $\Omega_*^{\Spin}$-submodules generated by even real projective
  spaces (which are obviously represented by manifolds of {\psc})
  are $\bF_2$-vector spaces.

  We begin with the case of
  $\Omega_n^{\Pin^-}\cong \widetilde\Omega_{n+1}^{\Spin}(\bR\bP^\infty)$, see
  \cite{MR261613}. Localized at $2$, there is an additive splitting
  $\MSpin\simeq \ko\vee (\image T)$, where $T$ is the transfer
  $$\MSpin\wedge \Sigma^8 B\PSp(3)_+\to \MSpin$$ defined by the
  $\bH\bP^2$-bundle construction, see \cite{MR1189863,MR1259520}. Classes
  in $\Omega_n^{\Pin^-}$  coming from the homotopy of
  $(\image T)\wedge \bR\bP^\infty$ are linear combinations of classes
  represented geometrically by dualizing a real line bundle $L$
  on a manifold with a bundle structure $\bH\bP^2\to P^{n+1}\to N^{n-7}$,
  where the structure group of the bundle is the isometry group of
  $\bH\bP^2$.  The line bundle $L$ has to come from $N$ since
  $\bH\bP^2$ is simply connected, so the resulting pin$^-$ $n$-manifold
  has to be an $\bH\bP^2$-bundle over a pin$^-$ $(n-8)$-manifold
  obtained by dualizing a real line bundle over $N$,
  and thus has a psc-metric.  So we are reduced to looking at
  $\pi_*(\ko\wedge \bR\bP^\infty)$.  These groups were
  computed in \cite[Theorem 5.1]{MR261613} and in
  \cite[Theorem 1]{MR1069818}.  There are summands of $\bZ/2$ in
  dimensions $0$ and $1$ mod $8$ that are detected by the index invariant
  $\alpha^-$, so these groups are \emph{not} represented by
  manifolds of {\psc}.  The rest of the homotopy groups of
  $\ko\wedge \bR\bP^\infty$ correspond to cyclic summands generated by
  real projective spaces of dimension $2$ mod $4$, and the result is
  obviously true for these.  So that completes the proof for the pin$^-$
  case.  The  pin$^+$ case is quite similar using the equivalence
  $$\MPin^+\simeq \MSpin\wedge M(3)$$ of \cite[Theorem 1]{MR1069818}.
  Everything is the same except for replacement of $M(1)$ by $M(3)$.
  The homotopy groups of the summand $(\image T)\wedge M(3)$
  are again represented by $\bH\bP^2$-bundles over lower-dimensional
  pin$^+$ manifolds, while the homotopy groups $\pi_*(\ko\wedge M(3))$
  include cyclic summands generated by
  real projective spaces of dimension $0$ mod $4$, as well as copies
  of $\bZ/2$ in dimensions $2$ and $3$ mod $8$ that are detected by
  the index invariant $\alpha^+$.  So again the theorem is true.
\end{proof}

We now want to answer question \ref{q:adpatedpsc} in the case where
$\tM$, $X$, and $\p X$ are simply connected.  Since we have a
double covering $p\co \p X\to B$, $\pi_1(B)\cong \pi_1 N(B)\cong \pi_1(M)$.
Now existence of a well-adapted psc-metric on $M$ implies
existence of a psc-metric on $B$. There are a number of distinct cases:
\begin{remark}
\label{rem:indobstrdoublecover}
Let $B$ have a simply connected spin double cover. 
\begin{enumerate}
\item If $B$ is spin, then since the Gromov-Lawson-Rosenberg conjecture
  holds for the fundamental group $\bZ/2$ \cite{MR1133900,MR1484887},
  if $B$ admits a psc-metric,
  then $\beta\circ c_*([B])=0$ in $KO_{n-1}(\bR[C_2])=KO_{n-1}\oplus KO_{n-1}$,
  and the converse holds if $n\ge 6$ (so that $\dim B\ge 5$). Here
  $\beta$ is the $KO$-assembly map discussed in \cite{MR1133900}.
\item If $B$ is oriented but non-spin, then $B$ admits a Dirac operator
  twisted by a direct sum of two copies the real line bundle defined
  by $c$ (this bundle has nontrivial $w_2$), but not an untwisted
  Dirac operator.  The index of this twisted Dirac operator is an
  obstruction to psc in $KO_{n-1}$, and a variant of Stolz's Theorem
  \cite{MR1189863} shows that vanishing of this obstruction is sufficient
  for $B$ to admit a psc-metric if $n-1\ge 5$ or $n\ge 6$
  \cite{MR1688827}.
\item If $B$ is not orientable but has a pin$^\pm$ structure, then
  the obstructions to a psc-metric on $B$ are covered
  by Theorem \ref{thm:pin} above.
\end{enumerate}
\end{remark}

Now we are ready for the main theorem of this section.
\begin{theorem}
\label{thm:mainC2}
Let $M^n$ be a non-spin closed $n$-manifold with a simply connected
spin double cover $\tM$.  Write $M = X\cup_{\p X} N(B)$ as above,
with $X$ and $\p X$ simply connected.  If $M$ admits a well-adapted
psc-metric, then $\alpha(\tM)=0$ in $KO_n$, there are additional
index obstructions to a psc-metric on $M$ enumerated in
\textup{Remark \ref{rem:indobstrdoublecover}} above,
  and $B$ admits a psc-metric. We thus also have to have
the vanishing of an index obstruction for $B$, which depends
on what subcase of \textup{Remark \ref{rem:pinvariants}} applies to $B$.
The converse, i.e., the statement that the vanishing of all
these index obstructions {\lp}for both $M$ and $B${\rp}
implies the existence of a well-adapted
psc-metric on $M$, holds if $n\ge 6$.
\end{theorem}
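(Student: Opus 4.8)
The plan is to prove the hard (converse) direction by a bordism-theoretic argument parallel to the proof of Theorem \ref{thm:pin}, but now carried out for the \emph{pair} $(M,B)$, i.e.\ in the bordism theory of manifolds with a $C_2$-bundle equipped with a dual submanifold. First I would set up the appropriate bordism group: a class is represented by $(M,B)$ where $M$ is pin$^\pm$, $B\hookrightarrow M$ is dual to the real line bundle $L$, and one records the induced pin$^\pm$-structure on $B$ (with its own line bundle $L|_B$). Restriction to $B$ gives a well-defined homomorphism from this bordism group to $\Omega_{n-1}^{\Pin^\pm}$ (or to the relevant twisted spin bordism group in the oriented-nonspin subcase of Remark \ref{rem:pinvariants}), and removing $N(B)$ gives the spin bordism class of $(X,\p X)$ with its principal $C_2$-bundle boundary. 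The key structural input is that, exactly as in the proof of Theorem \ref{thm:pin}, localized at $2$ the relevant Thom spectra split as $\ko$-pieces plus pieces in the image of the $\bH\bP^2$-transfer $T$, and the transfer summands are represented geometrically by $\bH\bP^2$-bundles; since $\bH\bP^2$ is simply connected, both the ambient manifold and its dual submanifold inherit $\bH\bP^2$-bundle structures over lower-dimensional bases, hence carry fiberwise psc-metrics that are automatically well-adapted (the $C_2$-bundle and the submanifold $B$ come entirely from the base). So one is reduced to the $\ko$-summands, where the surviving classes are generated by even-dimensional real projective spaces together with the $\bZ/2$'s detected by $\alpha^\pm$; the former are represented by standard well-adapted models ($\bR\bP^k\supset \bR\bP^{k-1}$ with the round metric, which is $C_2$-invariant near $\bR\bP^{k-1}$), and the latter are exactly the hypothesized vanishing index obstructions.

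Next I would invoke the Bordism Theorem in its relative/well-adapted form (\cite[Theorem 2.1.1]{MR3999671}, as used for Theorem \ref{thm:pin}): to produce a well-adapted psc-metric on a given $(M,B)$ it suffices to produce one on \emph{some} representative of its well-adapted bordism class, provided one also knows a surgery/cobordism result allowing well-adapted psc-metrics to be transported across bordisms of pairs. This surgery input is where the real work lies. One needs: (i) the usual Gromov--Lawson--Schoen--Yau surgery on $X$ away from $\p X$, keeping the product structure near the collar; (ii) $S^1$-equivariant, respectively $C_2$-equivariant surgery on $N(B)$, which amounts to $C_2$-invariant surgery on the disk bundle and reduces to surgery on $B$ together with the requirement that the line bundle $L$ be preserved; and (iii) compatibility along $\p X=\p N(B)$. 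Step (ii) is governed by the analogous surgery theorem for manifolds with $C_2$-bundle (the $C_2$-analogue of the $S^1$-equivariant surgery used in the $\spinc$ case and in \cite{MR3999671}), and the dimension hypothesis $n\ge 6$ is precisely what makes the codimension bookkeeping work on $B$ ($\dim B=n-1\ge 5$) and on $X$.

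The necessity direction is the easy half and I would dispatch it exactly as in Theorem \ref{thm:pin}: a well-adapted psc-metric on $M$ restricts to a psc-metric on $\tM$ (forcing $\alpha(\tM)=0$) and, by the Lichnerowicz--Bochner argument for the flatly-twisted $\Cl_{n+1}$- or $\Cl_{n+3}$-linear Dirac operator, forces the vanishing of $\alpha^\pm(M)$; it also restricts (after scaling the $S^1$/$C_2$-invariant metric on $N(B)$ to shrink the fibers) to a psc-metric on $B$, so the index obstruction for $B$ from the appropriate subcase of Remark \ref{rem:indobstrdoublecover} must vanish too. I expect the main obstacle to be step (ii)--(iii) of the converse: verifying that the well-adapted surgery theory is strong enough that every well-adapted bordism class with vanishing obstructions has a psc representative, i.e.\ that there is no extra obstruction hiding in the interaction between the surgeries on $X$ and the equivariant surgeries on $N(B)$ along their common boundary. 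Once that surgery machinery is in place, the bordism computation — being a mild elaboration of the one already carried out for Theorem \ref{thm:pin} — goes through with only routine changes.
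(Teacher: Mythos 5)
Your necessity argument is essentially the paper's, but the converse as you have set it up contains a genuine gap: everything rests on two pieces of machinery you do not supply, namely a computation of a new bordism theory of pairs $(M,B)$ via the Stolz splitting, and, more seriously, the ``well-adapted'' surgery/bordism theorem for pairs in your steps (ii)--(iii), i.e.\ $C_2$-equivariant surgery on $N(B)$ made compatible with surgery on $X$ along $\p X=\p N(B)$. You flag this yourself as where the real work lies, and nothing in the sketch rules out a hidden obstruction in that interaction. The paper never needs such equivariant surgery or any pair-bordism computation. Instead, in each subcase it uses an already-known exact sequence --- \eqref{eq:spinandpinplus} when $B$ is spin (so $M$ is pin$^+$), \eqref{eq:twspinandpinminus} when $B$ is oriented non-spin (so $M$ is pin$^-$), and Giambalvo's sequence \eqref{eq:Giambseq} with the theory $\Lambda_*$ when $M$ is oriented non-spin and $B$ is pin$^+$ --- in which the Bockstein map is exactly ``pass to the dual submanifold.'' Given vanishing obstructions, $B$ has a psc-metric; lift it to a $C_2$-invariant local product metric on $N(B)$ and form the double $P$ of $N(B)$ along $\p X$, which carries a psc-metric and has the same Bockstein as $M$. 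Exactness then makes $M$ bordant to $P\sqcup M'$ with $M'$ a closed spin manifold whose index vanishes by additivity, hence $M'$ has psc by Stolz, and the metric is carried across the bordism by ordinary Gromov--Lawson surgeries performed entirely in the interior of $X$, never touching $N(B)$ --- which is precisely why the resulting metric is well-adapted.

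A second, smaller defect: your bordism group of pairs is set up with $M$ pin$^\pm$, which omits the subcase in which $M$ is orientable but non-spin (e.g.\ $\bR\bP^n$ with $n\equiv 1\bmod 4$, where $B=\bR\bP^{n-1}$ is pin$^+$); such an $M$ admits neither a pin$^+$ nor a pin$^-$ structure, and this is exactly the case for which the paper must invoke the bordism theory $\Lambda_*$ of pairs $(M,L)$ with $w_2(M)=w_1(L)^2$. Moreover, your reduction of the $\ko$-summands to even projective spaces together with the classes detected by $\alpha^\pm$ merely mirrors the closed-manifold computation in Theorem \ref{thm:pin}; for pairs one would additionally have to show the generators admit \emph{well-adapted} psc representatives, which is a further claim your sketch asserts but does not verify.
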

\begin{proof}
  Let $\beta M=B$ be the ``Bockstein,'' the quotient of $\p X$ by the
  free $C_2$-action.  (The reason for the name ``Bockstein'' will appear
  later.)  First suppose that $\beta M$ is spin.  Since $N(\beta M)$
  is the disk bundle of the flat line bundle $L$ defined by the spin double
  cover of $M$, restricted to $\beta M$ where the nontriviality of the
  bundle is concentrated, the tangent bundle of $M$, restricted to
  $N(\beta M)$, is $p^*T(\beta M)\oplus L$, and since the tangent bundle
  of $\beta M$ is spin, we have $w_2(M)=0$ (since $X$ is also spin)
  and $w_1(M)=w_1(L)\ne 0$.  So $M$ is a pin$^+$-manifold.  (A case to
  keep in mind is $X = D^n$, $\p X=S^{n-1}$, $\beta M = \bR\bP^{n-1}$,
  $M=\bR\bP^n$, in the case where $n\equiv 0\mod 4$.) So we have four
  obstructions to a well-adapted psc-metric on $M$: $\alpha(\tM)\in KO_n$,
  $\alpha^+(M)\in KO_{n+3}$ from Theorem \ref{thm:pin}, and
  the two $KO_{n-1}$-valued obstructions to a psc-metric on $\beta M$.
  (The last two are the indices of the untwisted and twisted Dirac
  operators on $\beta M$.) Except for $\alpha(\tM)$ when $n\equiv0\mod 4$,
  all of these obstructions are $\bZ/2$-valued.  
  We have the following long exact sequence:
  \begin{equation}
  \label{eq:spinandpinplus}  
  \cdots \xrightarrow{\delta} \Omega_n^{\Spin} \xrightarrow{f}
  \Omega_n^{\Pin^+} \xrightarrow{\beta} \Omega_{n-1}^{\Spin}(BC_2)
  \xrightarrow{\delta} \Omega_{n-1}^{\Spin} \to \cdots.
\end{equation}
  Here $f$ is the forgetful map that forgets that a spin manifold is
  oriented and considers it as a pin$^+$ manifold via the natural map
  of Lie groups $\Spin(n)\hookrightarrow \Pin^+(n)$.  The ``Bockstein''
  map $\beta$ dualizes the line bundle on a pin$^+$ manifold defined
  by $w_1$, and produces a spin manifold one dimension lower.
  We call $\beta$ the Bockstein since it is a
  dimension-shifting connecting map in this exact sequence, just like
  the classical Bockstein map for homology. 
  The transfer map $\delta$ takes a spin manifold with a map to $BC_2$
  to the associated double cover of the manifold.  The exact sequence
  \eqref{eq:spinandpinplus} may be derived from a related exact sequence
  \[
  \cdots \to \pi_n(\MSpin\wedge\bZ/2)\to \Omega_n^{\Pin^+}
  \to \Omega_{n-2}^{\Pin^-}\to \pi_{n-1}(\MSpin\wedge\bZ/2)\to \cdots
  \]
  in \cite[Lemma 7]{MR1069818}, since $\Omega_{n-1}^{\Spin}(BC_2)$
  splits as $\Omega_{n-2}^{\Pin^-}\oplus \Omega_{n-1}^{\Spin}$,
  and $\delta$ restricted to the second factor is just multiplication
  by $2$ from $\Omega_{n-1}^{\Spin}$ to itself, which gives rise to
  a cofiber of $\pi_{n-1}(\MSpin\wedge\bZ/2)$.

  Now suppose that $n\ge 6$ and that all of the index
  obstructions vanish. We will use \eqref{eq:spinandpinplus}
  to show that $M$ admits a well-adapted psc-metric.
  By the Gromov-Lawson-Rosenberg
  conjecture for the group $\bZ/2$, $\beta M$ admits a psc-metric.
  Lift this to a local product metric on $N(\beta M)$ (which is locally
  the product of $\beta M$ with a flat real line bundle); this gives
  a $C_2$-invariant psc-metric on $\p X = \p(N(\beta M))$ which is
  a product metric on the boundary.
  The double $P$ of $N(\beta M)$ along $\p X$ is a pin$^+$-manifold
  admitting a psc-metric which agrees with $M$ on $N(\beta M)$.
  By the sequence \eqref{eq:spinandpinplus}, $M$ is pin$^+$-bordant
  to the disjoint union of $P$ and a closed spin manifold $M'$.  By the
  additivity of the index invariants, the index invariants for $M'$
  vanish, so $M'$ admits a psc-metric.  Thus $M'\sqcup P$ is a manifold
  with a psc-metric in the same pin$^+$-bordism class as $M$.  We can carry
  the psc-metric across the bordism, and since the necessary surgery can be
  done on the interior of $X$, away from $\p X$, without changing 
  $N(\beta M)$, the resulting psc-metric on $M$ is well-adapted.  This
  takes care of the case where $\beta M$ is a spin manifold.

  If $\beta M$ is oriented but not spin (case (2) of Remark
  \ref{rem:indobstrdoublecover}, the case studied in \cite{MR1688827},
  like the case of $X = D^n$, $\p X=S^{n-1}$, $\beta M = \bR\bP^{n-1}$,
  $M=\bR\bP^n$, in the case where $n\equiv 2\mod 4$),
  things are quite similar except that
  $M$ is now a pin$^-$-manifold instead of a pin$^+$-manifold.  The
  replacement for \eqref{eq:spinandpinplus} in this case is the following:
  \begin{equation}
  \label{eq:twspinandpinminus}  
  \cdots \xrightarrow{\delta} \Omega_n^{\Spin} \xrightarrow{f}
  \Omega_n^{\Pin^-} \xrightarrow{\beta} \Omega_{n-1}^{\Spin,\text{tw}}
  \xrightarrow{\delta} \Omega_{n-1}^{\Spin} \to \cdots.
  \end{equation}
  Here $\Omega_{n-1}^{\Spin,\text{tw}}$ is the bordism group in dimension $n-1$
  for oriented manifold with a spin double cover for a given
  element of $H^1(-,\,\bZ/2)$, $\delta$ again corresponds to the
  double cover, and $f$ again is the forgetful map, this time corresponding
  to the inclusion  $\Spin(n)\hookrightarrow \Pin^-(n)$. We have a
  Bockstein map $\beta$ as before.  In this case $N(\beta M)$, and hence
  $M$, has a pin$^-$ structure since $w_2$ is pulled back from $\bR\bP^\infty$
  via the map associated to $w_1$, or in other words, $w_2(M)=w_1(M)^2$.
  With the substitution of \eqref{eq:twspinandpinminus} for
  \eqref{eq:spinandpinplus}, the proof works just as before.

  The final case involves a related long exact sequence of
  \cite[Theorem 3.1]{MR321123}, relevant to the case where $M$ is an
  oriented non-spin manifold and $\beta M$ is a pin$^+$ manifold.
  This situation
  arises when $X = D^n$, $\p X=S^{n-1}$, $\beta M = \bR\bP^{n-1}$,
  $M=\bR\bP^n$, and $n\equiv 1\mod 4$.

  After renaming the maps from what they are called in \cite{MR321123}, 
  this sequence is as follows:
  \begin{equation}
    \label{eq:Giambseq}
  \cdots \xrightarrow{\delta} \Omega_n^{\Spin} \xrightarrow{\mathrm{enh}}
  \Lambda_n \xrightarrow{\beta} \Omega_{n-1}^{\Pin^+}
  \xrightarrow{\delta} \Omega_{n-1}^{\Spin} \to \cdots.
  \end{equation}
  Here $\Lambda_n$, defined in \cite{MR321121}, is a bordism group
  of pairs $(M,L)$ where $M$ is an oriented manifold, $L$ is a real
  line bundle, and $w_2(M)=w_1(L)^2$.
  Again $\beta$ is the \emph{Bockstein map}, sending the class of
  $(M,L)$ to the class of $\beta M$, dual to the line bundle $L$,
  $\delta$ is a transfer map, taking
  the class of a pin$^+$-manifold to the class of its spin double cover,
  and ${\mathrm{enh}}$ is an \emph{enhancement map}, sending the class
  of a spin manifold to the class of the same manifold paired with the
  trivial line bundle.

  We use \eqref{eq:Giambseq} as follows.  Suppose $M= X\cup_{\p X}N(\beta M)$
  is oriented but non-spin, with a spin double cover.  There is a real
  line bundle $L$ associated to the spin double cover, and $w_2(M)=w_1(L)^2$,
  so $(M,L)$ gives a class in $\Lambda_n$.  Now suppose $n\ge 6$ and all the
  index invariants of $M$ and $\beta M$ vanish.  Applying
  Theorem \ref{thm:pin} to $\beta M$, we see that it admits a psc-metric.
  Lift this to a local product metric on $N(\beta M)$.  As before, $M$
  is bordant (in the sense of the theory $\Lambda$) to $(M'\sqcup P, L)$,
  where $P$ is the double of $N(\beta M)$, $L$ lives on $P$ and is trivial
  on $M'$, and $M'$ is a closed spin manifold.  Again, additivity of
  the index invariants implies that $M'$ admits a psc-metric.  Then we
  use the bordism method to transfer the psc-metric from $M'\sqcup P$ to
  $M$, doing the surgery away from $N(\beta M)$, so that the metric we
  get is well-adapted.  That concludes the proof.
\end{proof}

\section{$S^1$-Bundles and $\mathbf{Spin}^c$-manifolds}
\label{sec:spinc}
\subsection{Preliminary observations and examples}
Let $(M,L)$ be a non-spin
$\spin^c$-manifold. We choose a submanifold $B\subset M$
dual to the bundle $L$; in particular, we identify the restriction
$L|_B$ with the normal bundle of the embedding $B\hookrightarrow M$.
Let $N(B)$ be a tubular neighborhood of $B$; we denote by $X$ the
closure of $M\setminus N(B)$. 

We obtain a decomposition $M = X\cup_{\p X} -N(B)$. Here $X$ is a spin
manifold, whose boundary $\p X$ is equipped with free $S^1$-action
since $\p X$ is the total space of the circle bundle $\p X\to B$.  This
action is consistent with a natural $S^1$-action on the tubular
neighborhood $N(B)$ since $N(B)$ is the disk bundle of the restriction
$L|_B$. 
\begin{remark}
Let $M$ be a non-spin simply connected $\spin^c$ manifold, i.e., with
$w_2(M)\ne 0$. The projective space $\bC\bP^n$ is an example of such $M$
if $n$ is even. Then a $\spin^c$-structure is given by a complex line
bundle $L$ on $M$ such that $c_1(L)$ reduces mod $2$ to
$w_2(M)$. Then, as we discussed above, $B$ is dual to $L$; by
construction, $c_1(L)$, and thus also $w_2(X)$, is trivial on the
complement $X$ of a tubular neighborhood $N(B)$ of $B$. Thus $X$ is a
spin manifold with boundary $\p X$, which is a circle bundle over
$B$.  We notice that the manifold $B$ is spin, since
\begin{equation*}
w_2(B) + \bigl(c_1(L)\!\!\!\mod 2\bigr) =
w_2\left(\left.TM\right\vert_{B}\right) =
\iota^* w_2(M) = \bigl(c_1(L)\!\!\!\mod 2\bigr),
\end{equation*}
which says that $w_2(B)=0$. 
\end{remark}
Since $B\subset M$ is a spin manifold, we have $\alpha(B)\in KO_{n-2}$
which evaluates the index of the Dirac operator
$\Dirac_{B}$.  Let $\Omega^{\spinc}_n$ be the $\spinc$-bordism group.
We also have a natural homomorphism
\begin{equation}\label{eq:alphac}
\alpha^{\spinc}\co \Omega^{\spinc}_n\to KU_n
\end{equation}
which  evaluates the index of the $\spinc$ Dirac operator
\begin{equation*}
\alpha^{\spinc}\co [(M,L)] \mapsto [\Dirac_{(M,L)}]\in KU_n.
\end{equation*}
Recall that a well-adapted metric $g$ on $M$ is such a
Riemannian metric that the restriction $g|_{N(B)}$ is $S^1$-invariant
and $g$ is a product-metric near $\p X = - \p N(B)$.

The following geometrical result gives a necessary condition for
existence of a well-adapted psc-metric:
\begin{theorem}[{\cite[Theorem C]{BB}}]
\label{thm:BB}
Let $Z$ be a compact manifold with free
$S^1$-action. Then $Z$ admits an $S^1$-invariant
psc-metric if and only if the quotient manifold $B=Z/S^1$ admits a psc-metric.
\end{theorem}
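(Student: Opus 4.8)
The plan is to exploit that a free $S^{1}$-action on a compact manifold $Z$ makes $\pi\co Z\to B=Z/S^{1}$ into a principal $S^{1}$-bundle, and that every $S^{1}$-invariant Riemannian metric on $Z$ is forced into a rigid ``Kaluza--Klein'' form. First I would note that the vertical distribution $\mathcal{V}=\ker d\pi$ is $S^{1}$-invariant, hence so is $\mathcal{H}=\mathcal{V}^{\perp}$, which is therefore a principal connection; the metric splits orthogonally as $g|_{\mathcal{H}}\oplus g|_{\mathcal{V}}$, the squared norm $|\xi|_{g}^{2}$ of the generating vector field $\xi$ is constant along orbits and so descends to a positive function $e^{2\phi}$ on $B$, and $g|_{\mathcal{H}}$ descends to a metric $h$ on $B$. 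Writing $\theta$ for the connection $1$-form normalized by $\theta(\xi)=1$ and $\Omega\in\Omega^{2}(B)$ for its curvature, one gets $g=\pi^{*}h+e^{2\phi}\,\theta\otimes\theta$. The essential input is then the scalar-curvature formula for such a metric (a special case of the O'Neill submersion formulas, equivalently Kaluza--Klein reduction with a ``dilaton''): there is a constant $c>0$ with
\[
\kappa_{g}=\pi^{*}\!\bigl(\kappa_{h}-2\Delta_{h}\phi-2|\nabla_{h}\phi|^{2}-c\,e^{2\phi}|\Omega|_{h}^{2}\bigr),
\]
where $\Delta_{h}f=\operatorname{div}_{h}\operatorname{grad}_{h}f$ and the last term is the integrability term $|A|^{2}$ of the submersion.

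Granting this identity, the implication ``$B$ admits a psc-metric $\Rightarrow$ $Z$ admits an $S^{1}$-invariant psc-metric'' is immediate: pick a psc-metric $h$ on $B$ and any principal connection $\theta$, and take $g_{t}=\pi^{*}h+t^{2}\,\theta\otimes\theta$, which is $S^{1}$-invariant. Now $\phi=\log t$ is constant, so $\kappa_{g_{t}}=\pi^{*}(\kappa_{h}-c\,t^{2}|\Omega|_{h}^{2})$, which is positive for $t$ small by compactness of $B$; in other words, one shrinks the circle fibers.

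For the converse, suppose $g$ is an $S^{1}$-invariant psc-metric, written in the above form. Dropping the nonnegative term $c\,e^{2\phi}|\Omega|_{h}^{2}$, the inequality $\kappa_{g}>0$ gives, on $B$,
\[
\kappa_{h}-2\Delta_{h}\phi-2|\nabla_{h}\phi|^{2}>0 .
\]
I would then extract a psc-metric on $B$ by a single conformal change: set $\tilde h=e^{2u}h$ with $u=\phi/(m-1)$, where $m=\dim B$. The conformal transformation law for scalar curvature yields
\[
\kappa_{\tilde h}=e^{-2u}\bigl(\kappa_{h}-2(m-1)\Delta_{h}u-(m-1)(m-2)|\nabla_{h}u|^{2}\bigr)=e^{-2u}\Bigl(\kappa_{h}-2\Delta_{h}\phi-\tfrac{m-2}{m-1}|\nabla_{h}\phi|^{2}\Bigr),
\]
and since $\tfrac{m-2}{m-1}\le 1\le 2$ the right-hand bracket is $\ge\kappa_{h}-2\Delta_{h}\phi-2|\nabla_{h}\phi|^{2}>0$, so $\kappa_{\tilde h}>0$. (For $m=2$ the gradient term carries coefficient $0$ and the same computation applies; the cases $m\le 1$ are vacuous, and in the intended applications $m=\dim B\ge 5$.) This completes the equivalence.

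The step I expect to need the most care is the derivation of the scalar-curvature identity with every sign correct --- in particular identifying the mean curvature vector of the fibers as $-\nabla_{h}\phi$, tracking its divergence contribution to the $\Delta_{h}\phi$ term, and confirming that the integrability term appears with a harmless minus sign --- together with the choice of conformal weight $u=\phi/(\dim B-1)$, which is exactly the weight that replaces the obstructive $2|\nabla\phi|^{2}$ by $\tfrac{m-2}{m-1}|\nabla\phi|^{2}<2|\nabla\phi|^{2}$ rather than something larger. Beyond that there is nothing global to overcome: the argument is entirely local over $B$ followed by one conformal rescaling, and uses no hypothesis on $Z$ or $B$ besides compactness (and, implicitly, $\dim B\ge 2$).
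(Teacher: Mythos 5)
Your argument is correct, and it is essentially the classical proof of this result: the paper itself does not prove Theorem \ref{thm:BB} but simply quotes it from B\'erard-Bergery (\cite[Theorem C]{BB}), whose argument is exactly the one you reconstruct --- write any invariant metric in Kaluza--Klein form $\pi^*h+e^{2\phi}\theta\otimes\theta$, shrink the fibers for one direction, and for the converse drop the nonnegative curvature term in the submersion formula and make a conformal change of the base metric by a power of the orbit-length function. Your choice $u=\phi/(m-1)$ is precisely the weight that converts the coefficient $2$ of $|\nabla_h\phi|^2$ into $\tfrac{m-2}{m-1}\le 2$, so the pointwise estimate closes; the only remaining quibbles are cosmetic (the value of the constant $c$, and the cases $\dim B\le 1$, where both sides of the equivalence fail for elementary reasons, e.g.\ Gauss--Bonnet, rather than being literally vacuous).
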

\begin{example}
\label{ex:K3}
(i) Let $B$ be a $K3$-surface (which is a simply connected spin
$4$-manifold with nonzero $\widehat A$-genus). Then $B$ does not admit
a psc-metric, but there is a circle bundle $p\co Y\to B$ with 
simply connected total space $Y$. To construct such a bundle, we
choose a primitive element $c\in H^2(B,\bZ)\cong \bZ^{22}$ and find a
complex line bundle $L(c)$ with $c_1(L(c))=c$. Then the bundle
$p\co Y\to B$ is the circle bundle of the line bundle $L(c)$.

The manifold $Y$ is necessarily spin, since $T_{Y}\cong p^*T_{B}
\oplus V$, where $V$ is the real tangent line bundle along the circle
fibers, which is trivial, and thus $w_2(T_{Y}) = p^*w_2(T_{B}) = 0$.
Furthermore, $Y$ is a spin boundary, since
$\Omega^\spin_5=0$. Thus there is a spin $6$-manifold $X$ with $\p X=Y$,
and we can do surgery on $X$ away from the boundary to ensure that $X$
is simply connected and
the pair $(X,Y)$ is $2$-connected.  In particular, we obtain that $Y$
has a psc-metric $g_Y$. However, Theorem \ref{thm:BB} implies
that any such psc-metric $g_Y$ cannot be $S^1$-invariant since
otherwise it would give a psc-metric on $B=K3$, which is not possible.

To construct a relevant $\spin^c$-manifold, we glue together $X$ and the
disk bundle $N(B)$ of $L(c)$ over $B$. With a little bit of work one can show
that non-spin $\spin^c$-manifold $M:=X\cup_{\p X}-N(B)$ comes together
with a line bundle $L\to M$ dual to $B$ (and trivial over $X$), so
that $L|_{B}$ coincides with the bundle $L(c)$ we started
with. Then the manifold $M$, being non-spin and simply-connected,
admits a psc-metric $g_M$. Thus we conclude that any psc-metric $g_M$
on $(M,L)$ cannot be well-adapted, since otherwise we would obtain an
$S^1$-invariant psc-metric on $Y$, and, consequently, a psc-metric on
$B=K3$ by Theorem \ref{thm:BB}.
\vspace{1mm}

(ii) Let $\Sigma^{10}$ be a homotopy $10$-sphere with nonzero
$\alpha$-invariant (i.e., representing the generator of
$KO_{10}=\bZ_2$). We consider the $\spin$-manifold
$B= \Sigma^{10}\# \bC\bP^5$. Since the $\alpha$-invariant is additive
on connected sums, the manifold $B$ does not admit a
psc-metric. Notice that $B$ is a \emph{fake complex projective space},
so it admits a principal $S^1$-bundle $Y\to B$ for which the total
space $Y$ is a homotopy $11$-sphere. There being no torsion in
$\Omega^\spin_{11}$, the exotic sphere $Y$ is a spin boundary and we
can choose a spin $12$-manifold $X$ bounding $Y$, such that $(X,Y)$ is
2-connected. Just as in the example (i), we construct a non-spin
$\spin^c$-manifold $(M,L)$ with $M=X\cup_{\p X} -N(B)$ (and $L$ dual
to $B$) which admits a psc metric $g_M$, but no such psc-metric is
well-adapted, since otherwise it would produce a psc-metric on $B$
(again, via Theorem \ref{thm:BB}).
\end{example}
These examples show that existence of a well-adapted psc-metric on a
$\spin^c$-manifold $(M,L)$ implies that the manifold $B$ dual to $L$ has
to admit a psc-metric. Since $B$ is spin, we obtain the first
obstruction $\alpha(B)\in KO_{n-2}$ for existence of a well-adapted
psc-metric on $(M,L)$. In the case when the manifold $B$ is
simply-connected (and $n-2\geq 5$), this is the only obstruction for
existence of a psc-metric on $B$.

Next, we choose a psc-metric $g_{B}$ on $B$. Then it gives us an
$S^1$-invariant psc-metric on $N(B)$ and, in particular, a psc-metric
$g_Y$ on $Y=\p N(B)$. Let $g_M$ be some well-adapted metric on $(M,L)$
(which is not necessarily a psc-metric outside of $N(B)$) extending the
above metric on $N(B)$.  Then to construct a well-adapted psc-metric
on $M$, it is enough to extend $g_Y$ to a psc-metric $g_X$ such that
$g_X$ is a product-metric near $Y$: we keep in mind that
$M=X\cup_{\p X=Y}-N(B)$.

Consider the $\spinc$ Dirac operator $\Dirac_{(M,L)}$ on $(M,L)$. 
The operator $\Dirac_{(M,L)}$ depends on a choice of a connection
$A_L$ on the line bundle $L$;
however, since the restriction $L|_X$ is trivial, we can choose
the connection $A_L$ to be flat on $L|_X$. Thus we can take the
restriction $\Dirac_{(M,L)}|_{X}$ to be the usual $\spin$ Dirac
operator. Assuming that $g_M$ restricts to a psc-metric $g_Y$, we
obtain a proper APS\footnote{Atiyah-Patodi-Singer}-boundary
problem for the Dirac operator on
$(X,Y,g_Y)$. We denote by $\Dirac_{(X,Y,g_Y)}$ the resulting Dirac
operator.

We obtain the next obstruction for existence a well-adapted psc-metric
$g_M$ on $(M,L)$, given by the relative index
$\alpha^{\mathrm{rel}}(X,Y, g_Y)\in KO_n$ of the operator
$\Dirac_{(X,Y,g_Y)}$.  Notice that \emph{a priori} the index
$\alpha^{\mathrm{rel}}(X,Y, g_Y)$ depends on a choice of $g_Y$, and,
consequently, on a choice of a psc-metric $g_B$.

As before we fix a connection $A_L$ on the line bundle $L$ which is flat on
$L|_X$, and consider again the Dirac operator $\Dirac_{(M,L)}$ on
$(M,L)$. We have the Lichnerowicz formula:
\begin{equation}\label{eq:Lich}
\Dirac_{(M,L)}^2 = \nabla^*\nabla
+ \frac14 s_{g_M} + \mathcal{R}_{A_L},
\end{equation}
where $g_M$ is a well-adapted psc-metric and $\mathcal{R}_{A_L}$ is a
corresponding curvature form. Then we can homotope the metric on the
fibers of $N(B)\to B$ to make it equal to that on a round hemisphere
$S^2_+\subset S^2(r)$ with the hemispherical fibers having small diameter
$r$ and thus big curvature. That implies we can make $s_{g_M}$ highly
positive without changing the curvature term $\mathcal{R}_{A_L}$. This
allows us to bound the square of the
Dirac operator $\Dirac_{(M,L)}^2$ away from
$0$.  Thus $\alpha^{\spinc}(M,L)=0$, where
$\alpha^{\spinc}\co \Omega^{\spin^c}_n\to KU_n$ is the index map.

We conclude: \emph{a priori} there are three obstructions for existence of a
well-adapted psc-metric on a $\spinc$-manifold $(M,L)$, where
$M=X\cup_{\p X=Y} -N(B)$ and $B$ is dual to $L$ as above:
\begin{enumerate}
\item[(i)] $\alpha(B)\in KO_{n-2}$;
\item[(ii)] $\alpha^{\mathrm{rel}}(X,Y, g_Y)\in KO_n$;
\item[(iii)] $\alpha^{\spinc}(M,L)\in KU_n$.
\end{enumerate}
We emphasize that the obstructions $\alpha(B)$ and
$\alpha^{\spinc}(M,L)\in KU_n$ are primary obstructions, and the
obstruction $\alpha^{\mathrm{rel}}(X,Y, g_Y)$ is secondary one: it
depends on a choice of a psc-metric $g_B$ on $B$.

\emph{A priori}, it is not clear why vanishing of these three obstructions
should imply existence of a well-adapted psc-metric, especially when it
comes to the secondary obstruction;
at least we do not know how to use this information
directly to construct such a metric. First, we would like to describe
geometrical meaning of the obstruction $\alpha^{\spinc}(M,L)\in KU_n$.

Before we get to this, however, we should point out that contrary to
what it might seem from the above, the obstructions (i) and (iii)
on our list are actually not independent.  By
\cite[Theorem 3]{MR2092777}, proved independently in
\cite{MR1245100,MR2514364}, $\alpha^{\spinc}(M,L)$ determines
$\alpha(B)$, in the following sense: when both are integers
(which happens when $n\equiv2\mod 4$, so that $KO_{n-2}\cong \bZ$),
they are equal, and when $\alpha^{\spinc}(M,L)$ is an integer but
$\alpha(B)$ is an integer mod $2$ (which happens when $n\equiv4\mod 8$),
then $\alpha(B)$ is the mod $2$ reduction of $\alpha^{\spinc}(M,L)$.
The three proofs of these facts (one by Ochanine and Fast and two
by Weiping Zhang) are very interesting exercises either in
index theory or in bordism theory, but would take us away from
our main theme here.  However, let us point out an interesting
application to Example 1.12.  In the first part of this example,
we constructed a case when $M$ is a $\spinc$ $6$-manifold and $B$ is a spin
manifold with non-zero $\widehat A$-genus.  Thus the theorem says
that whatever choice we take for $M$, it has to satisfy
$\alpha^c(M)=\widehat A(B)$.  In the second part of this example,
we constructed a  $\spinc$ $12$-manifold $M$, where $B$
is a homotopy $\bC\bP^5$ with nonzero $\alpha$-invariant
in $KO_{10}\cong \bZ/2$.  In this case, the theorem says that
whatever choice we take for $M$, $\alpha^c(M)$ has to be odd.
\subsection{Geometry of the index $\alpha^{\spinc}$}
Let $(M,L)$ be a non-spin $\spinc$-manifold. We choose a metric $g$ on
$M$, a hermitian metric $h$ on $L$, and a (unitary) connection
$A_L$ on $L$. These data give
us the $\spinc$ Dirac operator $\Dirac_{(M,L)}$. We have the
Lichnerowicz formula
\begin{equation*}
\Dirac_{(M,L)}^2 = \nabla^*\nabla + \frac14 \kappa_{g}+ \mathcal{R}_L
\end{equation*}
where the term $\mathcal{R}_L$ has the following form:
\begin{equation*}
 \mathcal{R}_L =\frac12\sum_{j<k}F_L(e_j,e_k)\cdot e_j\cdot e_k
\end{equation*}
where one sums over an orthonormal frame and $F_L$ is the
curvature of the connection $A_L$ on the line bundle $L$. We denote
\begin{equation*}
\kappa^L_{g}:=\kappa_{g} + 4\mathcal{R}_L,
\end{equation*}
and we say that $\kappa^L_{g}$ is the \emph{$L$-twisted scalar
curvature}.  Notice that $\kappa^L_{g}$ depends on a choice of the
hermitian metric $h$ on $L$ and the connection $A_L$.

We need to consider coupling between the Riemannian
curvature and the curvature of the line bundle $L$ (which is just given
by an ordinary $2$-form $\omega$, which after dividing by $2\pi\,i$, has
integral de Rham class representing $c_1(L)$). Now recall
\cite[Lemma D.13]{lawson89:_spin}, which says that \emph{any}
$2$-form $\omega$ with $\frac{\omega}{2\pi\,i}$ in the de Rham class
of $c_1(L)$ can be realized as the curvature of some unitary
connection on $L$. We call such an $L$ a \emph{$\spinc$ line bundle}.
Now we define what we mean by $\spinc$ surgery.
\begin{definition}
Let $(M, L)$ be a closed $\spinc$ manifold (i.e., $M$ is a closed
oriented manifold and $L$ is a complex line bundle on $M$ with
$c_1(L)$ reducing mod $2$ to $w_2(M)$). We say that $(M', L')$ be
obtained from $(M,L)$ by \emph{$\spinc$ surgery in codimension $k$} if
there is a sphere $S^{n-k}$ embedded in $M$ with trivial normal
bundle, $M'$ is the result of gluing in $D^{n-k+1}\times S^{k-1}$ in
place of $S^{n-k}\times D^k$, and there is a $\spinc$ line bundle
$\cL$ on the trace of the surgery, a bordism
$(W, \cL)\co (M,L)\bord  (M'L')$,
such that $\cL$ restricts to $L$ on $M$ and to $-L'$ on
$M'$ respectively.
\label{def:spincsurg} 
\end{definition}
\begin{theorem}[Spin$^c$ surgery theorem, {\cite[Theorem 4.2]{BJ}}]
\label{thm:codim3spinc}
Let $(M, L)$ be a closed $n$-dimensional $\spinc$ manifold.  Assume
that $M$ admits a Riemannian metric $g$ and $L$ admits a hermitian
bundle metric $h$ and a unitary connection $A$ such that $\kappa^L_g
>0 $. Let $(M', L')$ be obtained from $(M,L)$ by $\spinc$ surgery in
codimension $k\ge 3$.  Then there is a metric $g'$ on $M'$, and $L'$
admits a hermitian bundle metric $h'$ and a unitary connection $A'$,
such that $\kappa^{L'}_{g'} > 0$.
\end{theorem}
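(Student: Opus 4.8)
The plan is to run the Gromov--Lawson--Schoen--Yau surgery procedure in codimension $k\ge 3$, modified so as to keep control of the extra endomorphism term $\mathcal{R}_L$ in $\kappa^L_g$; recall that ``$\kappa^L_g>0$'' means the inequality $\tfrac14\kappa_g+\mathcal{R}_L>0$ of bundle endomorphisms, and that $|\mathcal{R}_L|_{\mathrm{op}}\le C_n|F_L|_g$ pointwise. The classical procedure changes the metric only inside a tubular neighborhood $U\cong S^{n-k}\times D^k$ of the embedded surgery sphere $S$, replacing the flat fibre metric by a torpedo metric whose thin neck of radius $\epsilon$ has fibre scalar curvature $\approx(k-1)(k-2)/\epsilon^2$ --- this is where $k\ge 3$ enters --- and then caps it off, yielding a metric $g'$ on $M'$ with $\kappa_{g'}>0$ on the modified region and $g'=g$ elsewhere. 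The new ingredient is to carry the line bundle and a compatible hermitian metric and connection across this modification and over the trace of the surgery.

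The first step is topological. Since $(M',L')$ arises by $\spinc$ surgery there is a $\spinc$ line bundle $\mathcal{L}$ on the trace $W$ restricting to $L$ on $M$, and in $W$ the sphere $S$ bounds the core of the handle, so $\langle c_1(L),[S]\rangle=0$. Because $H^2$ of a tubular neighborhood of $S$ is $H^2(S^{n-k};\bZ)$ --- which is torsion-free, and vanishes unless $n-k=2$ --- the class $c_1(L)$ restricts trivially to such a neighborhood, so $L$ is trivializable and $F_L$ is exact there. By \cite[Lemma D.13]{lawson89:_spin} every $2$-form in the de Rham class of $c_1(L)$ is the curvature of some unitary connection, so we may choose $A$ with $F_L\equiv 0$ near $S$; on that region $\mathcal{R}_L=0$, hence $\kappa^L_g=\kappa_g>0$ there. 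Then the classical surgery theorem, which requires only $\kappa_g>0$ on the surgery tube and $k\ge 3$, applies to $g$ restricted to a tube $U$ inside that region, and the trivialization of $L$ over $U$ together with one of $\mathcal{L}$ over the handle extends $L$ to a bundle $L'$ on $M'$ carrying a connection $A'$ that is flat on the surgery region and equal to $A$ elsewhere. Since $\mathcal{R}_{L'}$ vanishes on the surgery region we get $\kappa^{L'}_{g'}=\kappa_{g'}>0$ there, while $\kappa^{L'}_{g'}=\kappa^L_g>0$ off it, which is the desired conclusion.

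The delicate point --- the one I expect to be the main obstacle --- is that the gauge change that kills $F_L$ near $S$ is supported in a collar of $S$ and perturbs $\mathcal{R}_L$ there by a bounded but \emph{not} small amount, whereas the bare scalar curvature $\kappa_g$ need not itself be positive on that collar; so $\kappa^L_g>0$ could fail there after the adjustment. The repair is to enlarge the surgery tube so that this collar lies inside the region that gets bent, and to perform the torpedo deformation there as well, fixing the radii in the correct order: first the tube radius $r_0$, then the neck size $\epsilon\ll r_0$. Wherever the bundle-curvature perturbation is active the fibre scalar curvature produced by the (gently begun) bending at scale $r_0$ is $\gtrsim 1/r_0^2$, which dominates the $O(1)$ perturbation of $\mathcal{R}_L$ once $r_0$ is small, while near the outer edge of the tube the metric is unchanged and the connection is back to $A$, so there $\kappa^{L'}_{g'}\approx\kappa^L_g>0$. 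Carrying out this bookkeeping and checking the interpolation across the transition annulus --- reconciling the bending with the localization of the line bundle, which is where $k\ge 3$ is genuinely used --- is the technical heart of the argument, and is the part worked out in detail in \cite{BJ}.
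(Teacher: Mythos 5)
Your overall strategy has the right shape, and since this survey only states the theorem with a citation to \cite[Theorem 4.2]{BJ}, I am comparing you with the intended Gromov--Lawson-type argument: the topological step is fine (using the trace bundle $\cL$ to get $\langle c_1(L),[S^{n-k}]\rangle=0$, hence triviality of $L$ on the surgery tube, is exactly how one handles the case $n-k=2$), and the idea of flattening the connection near the surgery locus and letting the large curvature created by the surgery metric absorb the resulting perturbation of $\mathcal{R}_L$ is essentially the right one. But the quantitative mechanism you propose for the absorption --- the step you yourself flag as the technical heart --- is wrong. In the Gromov--Lawson bending, the positive gain is $(k-1)(k-2)\sin^2\theta/r^2$, and at the outer radius $r\approx r_0$, where the bend has ``gently begun,'' $\sin\theta$ is necessarily tiny: bending fast there introduces the principal-curvature term proportional to $-(k-1)\,\kappa_\gamma\sin\theta/r$, which is linear in $\sin\theta$ and so dominates the quadratic gain. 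This is precisely why Gromov and Lawson must rely on the \emph{ambient} positivity on the tube during the early stage. Hence the gain at scale $r_0$ is not $\gtrsim 1/r_0^2$, and shrinking $r_0$ buys nothing; a transition collar at radius $\sim r_0$, on which your gauge cut-off has perturbed $\mathcal{R}_L$ by an $O(1)$ amount while $\kappa_g$ is only bounded below by $-4\sup\lvert\mathcal{R}_L\rvert$, cannot be repaired by the bending.

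This leaves a genuine hole in the structure of the argument. If you flatten the connection \emph{before} bending, twisted positivity already fails on the collar and nothing in the gentle part of the bend restores it. If instead you push the interpolation deep into the tube --- near the neck, $r\approx\epsilon$, where $\kappa\sim(k-1)(k-2)/\epsilon^2$ genuinely dominates any bounded perturbation (for instance on an inserted cylinder $S^{n-k}\times S^{k-1}(\epsilon)\times[0,1]$ before capping off with the flat bundle) --- then on the annulus between $r_0$ and the neck the original non-flat connection is still present, and you can no longer invoke the classical surgery theorem as a black box: you must redo the Gromov--Lawson estimate for the twisted quantity $\tfrac14\kappa+\mathcal{R}_L$, checking that the tilting of the tangent planes and the induced metric change the endomorphism $\mathcal{R}_L$ only by controlled amounts (this does work, because the normal spheres of the bent hypersurface retain their ambient radius $\ge\epsilon$, so the pulled-back $F_L$ stays bounded), so that the gentle stage preserves twisted positivity and the $\sin^2\theta/r^2$ term eventually dominates everything bounded. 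That twisted bending estimate, together with placing the connection interpolation at the neck rather than at $r_0$, is the actual content of the proof in \cite{BJ}; as written, your proposal asserts the crucial inequality at the wrong place and defers the real estimate, so it is a plausible outline rather than a proof.
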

This leads to the following $\spinc$ bordism theorem:
\begin{theorem}[Spin$^c$ bordism theorem, {\cite[Theorem 4.3]{BJ}}]
\label{thm:bordismspinc}
Let $(M,L)$ be a connected closed $n$-dimensional $\spinc$ manifold
which is not spin.  Assume that $M$ is simply
connected and that $n\ge 5$.  Also assume that there exists
a pair $(M',L')$ in the same bordism class in $\Omega^{\spinc}_n$
with a metric $g'$ on $M'$, a hermitian metric $h'$ and
a unitary connection $A'$ on $L'$ such that
$\kappa_{g'}^{L'} > 0$.  Then
$M$ admits a Riemannian metric
$g$ and $L$ admits a hermitian bundle metric $h$
and a unitary connection $A$ such that $\kappa_{g}^{L} > 0$.
\end{theorem}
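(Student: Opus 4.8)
The plan is to combine the $\spinc$ surgery theorem (Theorem~\ref{thm:codim3spinc}) with the general Bordism Theorem of \cite[Theorem~2.1.1]{MR3999671}, in the same spirit as the proof of Theorem~\ref{thm:pin} above. Since $[(M,L)]=[(M',L')]$ in $\Omega^{\spinc}_n$, I would first fix a $\spinc$ bordism between them: a compact oriented $(n+1)$-manifold $W$ with $\p W=M\sqcup(-M')$, together with a complex line bundle $\cL$ on $W$ whose first Chern class reduces mod~$2$ to $w_2(W)$ and which restricts to $L$ on $M$ and to $-L'$ on $M'$. Note that $\dim W=n+1\ge 6$. The goal is then to convert this bordism into a finite chain of $\spinc$ surgeries, each of codimension $\ge 3$, in the sense of Definition~\ref{def:spincsurg}.

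To do this I would first improve $W$ by surgery in its interior, which does not disturb $\p W$ and hence does not disturb $M$, $L$, $M'$ or $L'$: make $W$ connected, then (using $\pi_1(M)=1$ and $\dim W\ge 6$) kill $\pi_1(W)$ by surgery on embedded circles, and finally kill $\pi_2(W,M)\cong H_2(W,M)$ by surgery on embedded $2$-spheres, so that the pair $(W,M)$ becomes $2$-connected. By standard handle trading (valid since $\dim W\ge 6$), $W$ is then obtained from $M\times[0,1]$ by attaching handles of index $\ge 3$ only; turning the bordism upside down (a handle of index $\lambda$ becomes one of index $n+1-\lambda$), and also discarding index-$0$ handles, $W$ is obtained from $M'\times[0,1]$ by attaching handles of index in $[1,n-2]$. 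A handle of index $\lambda$ attached along an embedded $S^{\lambda-1}$ to the current top boundary (which starts as $M'$ and ends as $M$) is exactly a surgery in codimension $n-\lambda+1$, which is $\ge 3$ once $\lambda\le n-2$; and since $\cL$ is globally defined on $W$, each such surgery is a $\spinc$ surgery of codimension $\ge 3$. Thus $M$ is obtained from $M'$ by a finite sequence of $\spinc$ surgeries, each of codimension $\ge 3$.

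The last step is to propagate the geometry along this chain: starting from the given $(g',h',A')$ on $(M',L')$ with $\kappa^{L'}_{g'}>0$, Theorem~\ref{thm:codim3spinc} yields, after each surgery, a metric, a hermitian bundle metric and a unitary connection with positive $L$-twisted scalar curvature on the result, and after finitely many steps we arrive at the desired $(g,h,A)$ on $(M,L)$ with $\kappa^L_g>0$. Equivalently, one may phrase the whole argument as a black-box application: Theorem~\ref{thm:codim3spinc} together with \cite[Theorem~2.1.1]{MR3999671} say that, for simply connected $M$ with $n\ge 5$, the existence of data $(g,h,A)$ with $\kappa^L_g>0$ depends only on the class of $(M,L)$ in $\spinc$-bordism, which is precisely the assertion.

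The step I expect to be the real obstacle is keeping control of the line bundle $\cL$ while surgering the interior of $W$ to make $(W,M)$ $2$-connected: one must choose the embedded circles and, above all, the embedded $2$-spheres so that their normal bundles are trivial and so that $\cL$ extends across each surgery with its mod-$2$ Chern class still equal to $w_2$ of the new total space. This is exactly the point where the standing non-spin hypothesis on $M$ is genuinely used, and where \cite{BJ} does the careful work. By contrast, the propagation step is immediate from Theorem~\ref{thm:codim3spinc}, and the underlying handle trading is classical once $\dim M\ge 5$; so the difficulty is concentrated entirely in the bookkeeping of $\cL$ through the modifications of $W$.
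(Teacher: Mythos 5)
Your outline is the expected one: realize the $\spinc$ bordism by handles, trade away the low-index handles so that $M$ is obtained from $M'$ by surgeries of codimension $\ge 3$, and propagate the twisted-psc data through each surgery by Theorem \ref{thm:codim3spinc}. The survey itself gives no proof of Theorem \ref{thm:bordismspinc} (it is quoted from \cite{BJ}), and your handle-trading and propagation steps, as well as the remark about index-$0$ handles (disjoint union with a sphere is itself a codimension-$(n+1)\ge 3$ surgery), are fine and surely in the spirit of \cite{BJ}.

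However, the step you yourself flag as ``the real obstacle'' is not deferrable bookkeeping; it is the entire content of the theorem, and in the form you state it (``kill $\pi_2(W,M)$ by surgery on embedded $2$-spheres'') it does not go through. To surger an embedded $S^2\subset \operatorname{int}W$ compatibly with the $\spinc$ structure you need its normal bundle to be trivial, i.e.\ $\langle c_1(\cL),[S^2]\rangle\equiv \langle w_2(W),[S^2]\rangle\equiv 0 \pmod 2$, and you need $\cL$ to extend over the attached $D^3\times S^{n-2}$, which forces the \emph{integral} evaluation $\langle c_1(\cL),[S^2]\rangle$ to vanish. So interior surgery can only kill classes in $\ker\bigl(c_1(\cL)\co\pi_2(W)\to\bZ\bigr)$, not arbitrary lifts of $\pi_2(W,M)$. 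After killing that kernel, $(W,M)$ is $2$-connected precisely when $\pi_2(M)\to\pi_2(W)$ is onto, i.e.\ when $c_1(L)\bigl(\pi_2(M)\bigr)\supseteq c_1(\cL)\bigl(\pi_2(W)\bigr)$, and this is \emph{not} a consequence of $M$ being non-spin: $w_2(M)\ne 0$ only guarantees that $c_1(L)$ takes some odd value on a spherical class, not that it generates. For instance $(M,L)=(\bC\bP^2\times S^4,\cO(3))$ is simply connected, non-spin and $\spinc$, but $c_1(L)\bigl(\pi_2(M)\bigr)=3\bZ$, whereas a given bordism $W$ may well contain a $2$-sphere on which $c_1(\cL)=1$; no admissible interior surgery on that $W$ can then make $\pi_2(M)\to\pi_2(W)$ surjective, so one must argue that the bordism (and possibly the twisted-psc representative $(M',L')$) can be replaced by a better one, using more than the class in $\Omega^{\spinc}_n$ together with ``non-spin.'' That is exactly the careful work you attribute to \cite{BJ} without supplying it. For the same reason the ``black-box'' version of your argument is circular: the general Bordism Theorem of \cite[Theorem 2.1.1]{MR3999671} requires the structure map of $M$ to the reference space to be a $2$-equivalence, and for $B=B\Spin^c$ (with $\pi_2\cong\bZ$, detected by $c_1$ of the determinant bundle) that condition is strictly stronger than $w_2(M)\ne 0$, so it is not verified by anything in your proposal. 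Until this point is handled, the proof is incomplete at its only genuinely nontrivial step.
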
 
\begin{remark}
We emphasize that the condition that $(M,L)$ is $\spinc$, but not
$\spin$, is essential: one should not assume that if $(M,L)$ is
$\spinc$ and $\alpha^c(M,L)=0$, then one can choose a metric on $M$
and a hermitian metric and connection $A$ on $L$ so that
$\kappa_{g}^{L} > 0$, for this is false.  Indeed, suppose $M$ is
actually $\spin$ and $\dim M$ is $1$ or $2$ mod $8$ with
$\alpha(M)\neq 0$, so there is no psc-metric $g$ on $M$.  Adding in
the term $\mathcal R_L$ in this case only makes things worse, because
in suitable coordinates, $\mathcal R_L$ has the form
$\begin{pmatrix} \omega &0\\0&-\omega\end{pmatrix}$, where the
operator $\omega$ is constructed from the curvature of $L$, which can
be any exact $2$-form on $M$, so $\kappa_{g}^L $ cannot be strictly
positive in this case unless the scalar curvature $\kappa_{g}$ is
strictly positive, which is impossible.
\end{remark}
Even though vanishing of the index $\alpha^c(M,L)$ does not guarantee
that $\kappa_{g}^L >0$ for given $\spinc$-manifold $(M,L)$, we prove
that there is some representative in the same bordism class which has
psc-metric $g$ with $\kappa_{g}^L>0 $ for an appropriate
choice of bundle data.
\begin{theorem}[{\cite[Corollary 5.2]{BJ}}]
\label{cor:nonspinpsc1}
Let $(M,L)$ be a simply connected $\spinc$ manifold with
$\alpha^{\spinc}(M,L)\allowbreak
=0$ in $KU_n$. Then after changing $(M,L)$ up to
$\spinc$ cobordism, we can assume that $M$ admits a Riemannian
psc-metric $g$ and the line bundle $L$ over $M$ defining the $\spinc$
structure admits a hermitian metric $h$ and a connection $A$ such that
$\kappa_{g}^L >0$.
%
%
%
%
%
\end{theorem}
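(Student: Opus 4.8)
The plan is to reduce the statement to the $\spinc$ bordism theorem (Theorem~\ref{thm:bordismspinc}) by exhibiting, for every bordism class $[(M,L)]\in\Omega^{\spinc}_n$ with $\alpha^{\spinc}(M,L)=0$, a representative carrying bundle data with $\kappa^{L}_g>0$. Since $\alpha^{\spinc}$ factors through a homomorphism $\Omega^{\spinc}_*\to KU_*$, the first step is to understand the kernel. Recall (Anderson--Brown--Peterson, or the Conner--Floyd description) that $\Omega^{\spinc}_*$ is torsion-free and concentrated in even degrees, and that after inverting nothing at all one has a splitting of $\MSpinc$, localized appropriately, into a wedge of suspensions of $\ku$ together with a complementary summand whose homotopy is detected by characteristic numbers other than the $\ku$-index. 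Concretely, one can choose additive generators of $\Omega^{\spinc}_n$ so that $\ker\alpha^{\spinc}$ is spanned by (a) products $[(N,L_N)]\cdot[V]$ where $V$ is a simply connected $\spin$ or $\spinc$ manifold of positive dimension with vanishing $\ku$-index, hence (by Theorems~\ref{thm:GLnonspin} and \ref{thm:Stolz}, applied to $V$) admitting a psc-metric, and (b) the image of a transfer/$\bC\bP^k$-bundle construction, whose classes are represented by total spaces of $\bC\bP^k$-bundles ($k\ge 2$) over lower-dimensional $\spinc$ manifolds. The second step is to observe that in case (a), the product $(N\times V, L_N\boxtimes\underline{\bC})$ carries the bundle data from $N$ together with the psc-metric on $V$; choosing the fiber metric on the disk bundle directions and the product metric, the twisted scalar curvature $\kappa^{L}_g=\kappa_g+4\mathcal R_L$ on $N\times V$ is made positive by scaling the $V$-factor, since $\mathcal R_L$ is pulled back from $N$ and does not grow under that scaling. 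In case (b), a $\bC\bP^k$-bundle with $k\ge 2$ always admits a fiberwise psc-metric (the Fubini--Study metric on $\bC\bP^k$ has positive scalar curvature, and $k\ge 2$ gives enough room), and the line bundle $L$ on the total space can be arranged to be pulled back from the base (as in the $\bH\bP^2$-argument of Theorem~\ref{thm:pin}), so again $\mathcal R_L$ is base-pulled-back and the large positive fiber curvature dominates; thus $\kappa^L_g>0$ on such a representative.

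The third step is to assemble these: an arbitrary class in $\ker\alpha^{\spinc}$ is a $\bZ$-linear combination of classes of types (a) and (b), together with genuine $\spin$ classes (the image of $\Omega^{\spin}_n\to\Omega^{\spinc}_n$) on which $\alpha^{\spinc}$ restricts to the $KO$-valued $\alpha$ followed by complexification; but the hypothesis is on $(M,L)$ with $M$ \emph{not} spin, so after adding a suitable psc representative we may reduce to the case where the relevant $\spin$-summand contribution also has vanishing $\alpha$ and hence (by Theorem~\ref{thm:Stolz}, $n\ge 5$) a psc-metric — on a spin manifold the bundle $L$ can be taken with a flat connection, so $\mathcal R_L=0$ and $\kappa^L_g=\kappa_g>0$. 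Disjoint unions and connected sums of psc representatives with the appropriate bundle data again have $\kappa^L_{g}>0$ (connected sum in codimension $n\ge 3$ is covered by the $\spinc$ surgery theorem, Theorem~\ref{thm:codim3spinc}, applied to a $\spinc$ line bundle on the trace). Therefore every class in $\ker\alpha^{\spinc}$ contains a representative $(M',L')$ with a metric $g'$, hermitian metric $h'$, and unitary connection $A'$ satisfying $\kappa^{L'}_{g'}>0$.

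The final step applies Theorem~\ref{thm:bordismspinc}: given the original simply connected, non-spin $(M,L)$ with $\alpha^{\spinc}(M,L)=0$ and $n\ge 5$, we have just produced a bordant pair $(M',L')$ with positive $L'$-twisted scalar curvature, so the bordism theorem transports this to $M$ itself, yielding the desired $g$, $h$, $A$ on $(M,L)$ after the stated $\spinc$ cobordism; this is precisely the conclusion of Theorem~\ref{cor:nonspinpsc1}. The main obstacle is the second step, i.e., pinning down an explicit additive basis of $\Omega^{\spinc}_n$ adapted to $\alpha^{\spinc}$ in which every kernel generator is visibly a product with a psc factor or a projective-bundle class — this requires the structure of $\MSpinc$ as a module over $\MSpin$ (the $\ku$-splitting and the transfer summand coming from $\bC\bP^2$-bundles, cf.\ F\"uhring~\cite{Fuhring2}) and careful bookkeeping of which generators are caught by complex versus real index invariants; the geometric curvature estimates, by contrast, are routine once the representatives are in hand.
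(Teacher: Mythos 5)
The central gap is your second step, and you have in effect conceded it: the whole argument hinges on an explicit identification of $\ker\alpha^{\spinc}\subset\Omega^{\spinc}_n$ by classes that visibly carry positive twisted scalar curvature, and this is exactly the hard computational input that the paper supplies as Theorem \ref{thm:transfer} (the kernel is generated by the images of the $\bC\bP^2$-transfer $T^{\spinc}$ for the structure group $\SU(3)$ together with Stolz's $\bH\bP^2$-transfer for $\PSp(3)$, proved by $\spinc$-bordism computations in \cite[\S 5]{BR}). Your proposed substitute decomposition is asserted rather than proved, and the structural facts you lean on are off: $\Omega^{\spinc}_*$ is \emph{not} torsion-free (it has $2$-torsion detected by Stiefel--Whitney numbers, which is precisely why the paper mentions those; all of that torsion lies in $\ker\alpha^{\spinc}$ and is exactly what the transfer images must account for), and in your case (a) the deduction ``$V$ has vanishing $\ku$-index, hence psc by Theorem \ref{thm:Stolz}'' fails for spin $V$, since $\alpha(V)\in KO_n$ can be nonzero $2$-torsion while its complex image vanishes --- the paper's remark after Theorem \ref{thm:bordismspinc} is built on exactly this phenomenon. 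The same issue undercuts your third step, where you again pass from vanishing of $\alpha^{\spinc}$ to vanishing of the $KO$-valued $\alpha$ on a spin summand.

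Your geometric treatment of case (b) is also incorrect as stated. For the $\bC\bP^2$-bundle classes the $\spinc$ line bundle on the total space cannot be arranged to be pulled back from the base: the fiber $\bC\bP^2$ is non-spin, so $L$ must restrict to $\cO(1)$ (odd $c_1$) on each fiber; the analogy with the real line bundle in the $\bH\bP^2$ argument of Theorem \ref{thm:pin} does not apply because $H^2(\bC\bP^2;\bZ)\neq 0$. Consequently $\cR_L$ has a fiberwise part whose norm scales like $r^{-2}$ when the fiber metric is shrunk, i.e.\ at the same rate as the fiber scalar curvature, so it can never be made negligible by scaling; positivity must come from the pointwise Fubini--Study estimate $\tfrac14\kappa_{g_{FS}}^{\cO(1)}\ge 6-2>0$ on the fiber, which is the actual content of the paper's argument. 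Finally, your last step misapplies Theorem \ref{thm:bordismspinc}: the conclusion of Theorem \ref{cor:nonspinpsc1} is only ``after changing $(M,L)$ up to $\spinc$ cobordism,'' so producing a bordant representative with $\kappa^{L'}_{g'}>0$ already finishes the proof (and the paper stresses there is no dimension restriction here); invoking the bordism theorem to transport the metric to $M$ itself requires hypotheses ($M$ non-spin, simply connected, $n\ge5$) not assumed in this statement and is the route to Theorem \ref{cor:nonspinpsc2}, not to this one.
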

We notice that we do not have a dimensional restriction here; this is
because ``changing $(M,L)$ up to $\spinc$ cobordism'' makes the
problem of finding an appropriate psc metric and bundle data very
flexible. On the other hand, the following more elegant result holds
for non-spin $\spinc$ manifolds:
\begin{theorem}
\label{cor:nonspinpsc2}
Let $(M,L)$ be a simply connected non spin $\spinc$ manifold with
$\alpha^{\spinc}(M,L)=0$ in $KU_n$ with $n=\dim M\geq 5$.  Then $M$
admits a Riemannian psc-metric $g$, a hermitian metric $h$ and a
connection $A_L$ such that $\kappa_{g}^L >0$.
%
%
%
%
%
\end{theorem}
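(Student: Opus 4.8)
The plan is to deduce this from the more flexible Theorem \ref{cor:nonspinpsc1} by transporting the relevant geometric structure back across a $\spinc$ bordism, using the $\spinc$ bordism theorem (Theorem \ref{thm:bordismspinc}). The whole point of this refinement over Theorem \ref{cor:nonspinpsc1} is that the non-spin hypothesis, together with $n\ge 5$, lets us avoid changing $(M,L)$ at all — the bordism-theoretic input is used only to produce an auxiliary good representative, and then the surgery machinery carries the structure back to $M$ itself.

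First I would apply Theorem \ref{cor:nonspinpsc1} to $(M,L)$. Since $M$ is simply connected and $\alpha^{\spinc}(M,L)=0$ in $KU_n$ — and that result needs no dimensional restriction — there is a pair $(M',L')$ lying in the same class of $\Omega^{\spinc}_n$ as $(M,L)$ which carries a Riemannian metric $g'$, a hermitian metric $h'$ on $L'$, and a unitary connection $A'$ with $\kappa^{L'}_{g'}>0$. Second, I would feed $(M',L')$ into Theorem \ref{thm:bordismspinc} as the required ``good'' bordism representative: all of its hypotheses hold for our given $(M,L)$, namely that $(M,L)$ is a connected closed $n$-dimensional $\spinc$ manifold, that it is not spin (by assumption), that $M$ is simply connected, that $n\ge 5$, and that there is a bordant pair with positive $L'$-twisted scalar curvature. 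Theorem \ref{thm:bordismspinc} then hands us a metric $g$ on $M$ and a hermitian metric $h$ and unitary connection $A_L$ on $L$ with $\kappa^L_g>0$, which is exactly the assertion.

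One point worth flagging in the write-up: the representative $(M',L')$ produced by Theorem \ref{cor:nonspinpsc1} need not itself be non-spin, but this causes no trouble, since Theorem \ref{thm:bordismspinc} only requires the \emph{target} manifold $M$ to be non-spin; the non-spin condition on $M$ is precisely what ensures the bordism from $(M',L')$ to $(M,L)$ can be traversed by genuine $\spinc$ surgeries in codimension $\ge 3$ (Theorem \ref{thm:codim3spinc}) without encountering a spin-type index obstruction, exactly as the Remark following Theorem \ref{thm:bordismspinc} emphasizes (the spin, $\dim\equiv 1,2\bmod 8$ case being the counterexample one must steer around).

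Given the two cited theorems there is essentially no obstacle in this deduction — the genuine difficulty is entirely contained in Theorems \ref{thm:codim3spinc}, \ref{thm:bordismspinc} and \ref{cor:nonspinpsc1}. If one had to prove it from scratch, the hard part would be the $\spinc$ surgery theorem, where one must deform the metric and the connection on $L$ simultaneously over the handle while keeping the curvature contribution $4\mathcal{R}_L$ from overwhelming the positivity of $\kappa^L_g=\kappa_g+4\mathcal{R}_L$, together with the bordism analysis underlying Theorem \ref{cor:nonspinpsc1}, which rests on understanding $\ker\bigl(\alpha^{\spinc}\co \Omega^{\spinc}_*\to KU_*\bigr)$ well enough to exhibit an explicit psc representative carrying compatible bundle data.
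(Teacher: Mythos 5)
Your overall strategy --- produce a bordant pair with $\kappa^{L'}_{g'}>0$ and then use the $\spinc$ bordism theorem, whose extra hypotheses (non-spin, simply connected, $n\ge 5$) are exactly the extra hypotheses of Theorem \ref{cor:nonspinpsc2}, to pull the structure back to $(M,L)$ itself --- is the same route the paper takes: its proof of Theorem \ref{cor:nonspinpsc1} is precisely the construction of such a representative out of geometric $\bC\bP^2$- and $\bH\bP^2$-bundles via Theorem \ref{thm:transfer}, and Theorem \ref{thm:bordismspinc} is the transport mechanism. Your observation that only the target $(M,L)$ needs to be non-spin is also consistent with the statements as given.

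Two points, however, keep the deduction from being as frictionless as you claim. First, the conclusion of Theorem \ref{cor:nonspinpsc2} asks for a single metric $g$ that is simultaneously a psc-metric and satisfies $\kappa^L_g>0$, whereas Theorem \ref{thm:bordismspinc} only delivers $\kappa^L_g>0$; you should close this by noting that the pointwise spectrum of $\mathcal{R}_L$ on the spinor bundle is symmetric about $0$ (this is exactly how the Remark following Theorem \ref{thm:bordismspinc} argues, with $\mathcal{R}_L$ of the form $\diag(\omega,-\omega)$), so positive definiteness of $\kappa^L_g=\kappa_g+4\mathcal{R}_L$ already forces $\kappa_g>0$. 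Second, and more substantively, the paper states that the arguments proving Theorem \ref{cor:nonspinpsc1} do \emph{not} yet prove Theorem \ref{cor:nonspinpsc2}: one must in addition analyze the image of $\Omega^{\spin}_n$ in $\Omega^{\spinc}_n$. The reason is that the representative produced by the kernel analysis can involve spin summands carrying an (essentially) trivial line bundle, on which $\kappa^L_g>0$ amounts to ordinary positive scalar curvature; that condition is governed by the $KO$-valued $\alpha$-invariant, which the complex index $\alpha^{\spinc}$ does not see, since complexification $KO_n\to KU_n$ kills the $\bZ/2$ summands in dimensions $1,2 \bmod 8$ --- the very phenomenon behind the counterexamples in that Remark. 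One therefore needs to know how such $\alpha$-torsion classes behave under $\Omega^{\spin}_n\to\Omega^{\spinc}_n$ before a usable representative in the given bordism class can be arranged. By quoting Theorem \ref{cor:nonspinpsc1} as a black box you have implicitly assumed this is settled, so as a formal deduction from the survey's packaged statements your argument goes through; but your closing claim that all the genuine difficulty sits inside Theorems \ref{thm:codim3spinc}, \ref{thm:bordismspinc} and \ref{cor:nonspinpsc1} omits this additional bordism computation, which is where the paper locates the actual content of the passage from Theorem \ref{cor:nonspinpsc1} to Theorem \ref{cor:nonspinpsc2}.
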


\noindent
Proofs of Theorems \ref{cor:nonspinpsc1} and \ref{cor:nonspinpsc2} are
based on the above bordism Theorem and studying the kernel of the index
homomorphism $\alpha^{\spinc}: \Omega^{\spinc}_n \to KU_n$.
To explain the idea, we first recall some basic facts about
$\spinc$ bordism, see  \cite[Chapter XI]{MR0248858}, 
\cite[\S8]{MR0234475}, and \cite{MR1166518}. In particular, we have an
isomorphism of bordism groups
\begin{equation*}
\Omega^{\spinc}_n\cong
\widetilde\Omega^{\spin}_{n+2}(\bC\bP^\infty),
\end{equation*}
where $\widetilde\Omega^{\spin}_{n+2}(\bC\bP^\infty)$ is the reduced
bordism group. Next, classes in $\spinc$ bordism are detected by their
Stiefel Whitney numbers (which are constrained just by the Wu
relations and the vanishing of $w_1$ and $w_3$) and integral
cohomology characteristic numbers (where in addition to the Pontryagin
classes, one can use powers of $c_1$ of the line bundle defining the
$\spinc$ structure) \cite[Theorem, p.\ 337]{MR0248858}. We do not need
to state all these results, however, we need a few examples.

We notice that the bordism class can change, depending on the choice
of $\spinc$ structure. Thus, for example,
$\Omega^{\spinc}_2\cong \bZ$, with all classes represented by
$(\bC\bP^1, L)$, $L$ a complex line bundle with $c_1(L)$ even, and the
isomorphism to $\bZ$ is given by
\begin{equation*}
\begin{array}{c}
(\bC\bP^1, L)\mapsto \frac12 \langle
c_1(L),\,[\bC\bP^1]\rangle.
\end{array}
\end{equation*}
Similarly, $\Omega^{\spinc}_4\cong \bZ^2$, with one generator given by
$(\bC\bP^1, \cO(2))^2$, with $\alpha^{\spinc}(\bC\bP^1, \cO(2))^2=1$,
and the other generator given by $(\bC\bP^2, \cO(1))$, where $c_1$ of
the anticanonical bundle $\cO(1)$ is the standard generator $x$ of
$H^2(\bC\bP^2;\bZ)$, on which $\alpha^{\spinc}$ takes the value $0$.
The calculation of $\alpha^{\spinc}$
on this generator is worked out by Hattori \cite{MR508087}
\begin{equation*}
\begin{aligned}
\alpha^c(\bC\bP^2,\cO(1))&=\ind\Dirac_{\bC\bP^2,\cO(1)} \\
&= \langle \widehat {\mathcal A}(\bC\bP^2)e^{x/2}, [\bC\bP^2]\rangle\\
&= \big\langle (1-\tfrac{1}{8}x^2)
(1 + \tfrac{1}{2}x + \tfrac{1}{2}\tfrac{x^2}{4}),
[\bC\bP^2]\big\rangle = 0,
\end{aligned}
\end{equation*}
by the Atiyah-Singer Theorem \cite[Theorem D.15, p.\
399]{lawson89:_spin}.

This last example turns out to be crucial, because there is a sense in
which $\bC\bP^2$ with the bundle $\cO(1)$, the dual of the
tautological bundle, generates the kernel of $\alpha^c$.  In more
detail, we use $(\bC\bP^2,\cO(1))$ to construct a transfer map
\begin{equation*}
T^{\spinc}\co \Omega^{\spinc}_n(BG)\to \Omega^{\spinc}_{n+4},
\end{equation*}
where $G$ is the Lie group $\SU(3)$, as follows.  The group $\SU(3)$
acts transitively on $\bC\bP^2\cong G/H$, where
$H=S(\U(2)\times \U(1))$, preserving the class of the bundle $\cO(1)$.
In particular, we obtain a fiber bundle $p: BH \to BG$ with
a fiber $\bC\bP^2$ and the structure group $\SU(3)$. 

In fact, the bundle $p: BH \to BG$ is a universal geometrical
$\bC\bP^2$-bundle for all $\bC\bP^2$-bundles with the structure group
$\SU(3)$. Thus given a $\spinc$ manifold $(M, L)$ and a map $f\co M\to
BG$, we can form the associated $\bC\bP^2$ bundle $\hat p\co E\to M$ as
a pull-back:
\begin{equation*}
\begin{diagram}
\setlength{\dgARROWLENGTH}{1.7em}
\node{E}
\arrow{e,l}{\hat f}
\arrow{s,l}{\hat p}
\node{BH} 
\arrow{s,l}{p}
\\
\node{M}
\arrow{e,l}{f}
\node{BG} 
\end{diagram}
\end{equation*}
where $E=M\times_f \bC\bP^2$ has dimension $n+4$ and has a $\spinc$
structure inherited from the $\spinc$ structure on $M$ defined by $L$
and the $\spinc$ structure on $\bC\bP^2$ defined by the bundle $\cO(1)$.

There is another transfer map introduced by Stolz in \cite{MR1189863}
and \cite{MR1259520}. This is defined similarly, but with
$G=\SU(3)$ replaced by $\PSp(3)$, $H=S(\U(2)\times \U(1))$ replaced
by $P(\Sp(2)\times \Sp(1))$, and $\bC\bP^2$ replaced by $\bH\bP^2$.
One obtains a transfer map
\begin{equation*}
T^{\spin}\co \Omega^{\spin}_n(B\!\PSp(3))\to \Omega^{\spin}_{n+8}.
\end{equation*}
Here is a main technical result we need:
\begin{theorem}
\label{thm:transfer}  
The above transfer maps
\begin{equation*}
T^{\spinc}\co \Omega^{\spinc}_{n-4}(B\SU(3))\to \Omega^{\spinc}_{n}\ \ \ \mbox{and}
\ \ \ 
T^{\spin}\co \Omega^{\spinc}_{n-8}(B\!\PSp(3))\to \Omega^{\spinc}_{n}
\end{equation*}
are such that
$\left\langle\mathrm{Im}(T^{\spinc})\cup \mathrm{Im}(T^{\spin})\right\rangle
= \mathrm{Ker }\ \alpha^{\spinc}$ as abelian groups.
\end{theorem}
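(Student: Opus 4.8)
The plan is to treat the two inclusions of the asserted equality separately and, for the harder one, to pass to the stable homotopy category and argue one prime at a time.

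The inclusion $\langle\mathrm{Im}(T^{\spinc})\cup\mathrm{Im}(T^{\spin})\rangle\subseteq\mathrm{Ker}\,\alpha^{\spinc}$ is a formal consequence of multiplicativity of the index in fibre bundles. If $\hat p\co E\to M$ is the $\bC\bP^2$-bundle attached to a map $M\to B\SU(3)$ and a $\spinc$ manifold $(M,L)$, then $\SU(3)$ acts on $\bC\bP^2$ by isometries preserving the $\spinc$ structure determined by $\cO(1)$, so the fibrewise $\spinc$ Dirac operator has index $\ind\Dirac_{\bC\bP^2,\cO(1)}=\alpha^c(\bC\bP^2,\cO(1))=0$ by Hattori's computation recalled above; the families index theorem then forces $\alpha^{\spinc}(E,\cL)=0$. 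The identical argument, with $\PSp(3)$ acting on $\bH\bP^2$ by isometries and fibrewise (genuinely spin, untwisted) Dirac index $\widehat A(\bH\bP^2)=0$, shows $\alpha^{\spinc}\circ T^{\spin}=0$. This gives one inclusion.

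For the reverse inclusion I would first record that $\Omega^{\spinc}_*$ is finitely generated in each degree and that its torsion is $2$-primary only --- this follows from the detection of $\spinc$ bordism by Stiefel--Whitney and integral characteristic numbers quoted above, together with the fact that away from $2$ the spectrum $\MSpinc$ is a wedge of even suspensions of $BP$ --- so it is enough to prove $\mathrm{Ker}\,\alpha^{\spinc}\subseteq\langle\cdots\rangle$ after localizing at each prime. Throughout I use the standard equivalence $\MSpinc\simeq\Sigma^{-2}(\MSpin\wedge\Sigma^\infty\bC\bP^\infty)$ underlying $\Omega^{\spinc}_n\cong\widetilde\Omega^{\spin}_{n+2}(\bC\bP^\infty)$, I regard $\alpha^{\spinc}$ as induced by the $\spinc$ Atiyah--Bott--Shapiro orientation $\MSpinc\to\ku$, and I realize the two transfers as maps of spectra $T^{\spinc}\co\MSpinc\wedge\Sigma^4(B\SU(3))_+\to\MSpinc$ and $T^{\spin}\co\MSpinc\wedge\Sigma^8(B\PSp(3))_+\to\MSpinc$. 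At an odd prime $p$, both $\MSpinc_{(p)}$ and $\ku_{(p)}$ are concentrated in even degrees and torsion-free, $\alpha^{\spinc}_{(p)}$ is split surjective, and filtering $\bC\bP^\infty$ by its skeleta one checks on the (collapsing) Atiyah--Hirzebruch associated graded that every class on which $\alpha^{\spinc}$ vanishes is a $\bZ_{(p)}$-linear combination of classes in $\mathrm{Im}(T^{\spinc})\cup\mathrm{Im}(T^{\spin})$; so the odd-primary part is routine.

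The prime $2$ is the heart of the matter. Here I would begin from Stolz's splitting $\MSpin_{(2)}\simeq\ko\vee(\mathrm{Im}\,T_{\MSpin})$ \cite{MR1189863,MR1259520}, where $T_{\MSpin}\co\MSpin\wedge\Sigma^8(B\PSp(3))_+\to\MSpin$ is his $\bH\bP^2$-bundle transfer, and smash it with $\Sigma^{-2}\Sigma^\infty\bC\bP^\infty$ to get
\[
\MSpinc_{(2)}\ \simeq\ \Sigma^{-2}\bigl(\ko\wedge\bC\bP^\infty\bigr)\ \vee\ \Sigma^{-2}\bigl((\mathrm{Im}\,T_{\MSpin})\wedge\bC\bP^\infty\bigr).
\]
One identifies the second summand with the image of the $\bH\bP^2$-bundle transfer $T^{\spin}$ on $\spinc$ manifolds --- the operation $\Sigma^{-2}(-\wedge\bC\bP^\infty)$ is exactly the passage from a spin base to a $\spinc$ base with the determining line bundle pulled back --- so this summand lies in $\langle\mathrm{Im}\,T^{\spin}\rangle$, and since $\widehat A(\bH\bP^2)=0$ it also lies in $\mathrm{Ker}\,\alpha^{\spinc}$. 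It therefore suffices to show that the composite of the $\bC\bP^2$-bundle transfer with the projection onto $\Sigma^{-2}(\ko\wedge\bC\bP^\infty)$ has image precisely the kernel of the orientation map $\Sigma^{-2}(\ko\wedge\bC\bP^\infty)\to\ku$; equivalently, that the cokernel of that transfer, on homotopy, is $\ku_*$. This is a $\ku$-analogue of Stolz's theorem, and I would prove it by his method: the transfer composed with the orientation is null by the easy inclusion, hence factors through the fibre $F$ of $\Sigma^{-2}(\ko\wedge\bC\bP^\infty)\to\ku$, and one shows the resulting map is onto in $\bZ/2$-homology modulo the image of the Steenrod algebra --- equivalently a surjection on the $E_2$-page of the Adams spectral sequence --- the essential input being that $\bC\bP^2=S^0\cup_\eta e^2\cup e^4$ plays the same structural role for $\ku\simeq\ko\wedge C\eta$ that $\bH\bP^2$ plays for $\ko$, together with $\alpha^c(\bC\bP^2,\cO(1))=0$, which pins down which homotopy generator the universal $\bC\bP^2$-bundle over $B\SU(3)$ realizes. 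Assembling the two summands yields $\mathrm{Ker}\,\alpha^{\spinc}=\langle\mathrm{Im}(T^{\spinc})\cup\mathrm{Im}(T^{\spin})\rangle$ at $2$ as well.

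The genuinely new step, and the one I expect to be hardest, is this $\ku$-analogue of Stolz's theorem at $p=2$: identifying the cokernel of the $\bC\bP^2$-bundle transfer on $\Sigma^{-2}(\ko\wedge\bC\bP^\infty)$ with $\ku$. The families-index argument and the odd-primary bookkeeping are formal, and the $(\mathrm{Im}\,T_{\MSpin})\wedge\bC\bP^\infty$ summand is handled by quoting Stolz; but the $\ko\wedge\bC\bP^\infty$ factor carries the delicate $2$-torsion, and controlling it demands a careful homology and Adams-spectral-sequence comparison of the $\bC\bP^2$-transfer with the $\ko$-module structure of $\ku$.
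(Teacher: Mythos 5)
Your overall architecture is the right one, and it matches the route the paper has in mind (the survey defers the proof to \cite[Section 5]{BR}, which rests on Stolz's 2-local splitting of $\MSpin$ and on the analysis of the $\bC\bP^2$-transfer going back to F\"uhring): prove $\alpha^{\spinc}\circ T^{\spinc}=0=\alpha^{\spinc}\circ T^{\spin}$ by fibrewise index theory, smash Stolz's splitting $\MSpin_{(2)}\simeq \ko\vee(\image T)$ with $\Sigma^{-2}\Sigma^\infty\bC\bP^\infty$, absorb the $(\image T)\wedge\bC\bP^\infty$ summand into $\mathrm{Im}(T^{\spin})$, and reduce to a statement about the $\bC\bP^2$-transfer on the $\ko\wedge\bC\bP^\infty$ factor. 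One local caveat on the ``easy'' inclusion: vanishing of the \emph{numerical} fibre index is not enough for the families argument, since the index bundle over $B\SU(3)$ (resp.\ $B\PSp(3)$) could be a rank-zero virtual bundle with nonzero Chern character and still contribute to $\alpha^{\spinc}$ of the total space. What you need is that the index \emph{bundle} vanishes, which holds here because the fibrewise kernel and cokernel are zero: for $\bH\bP^2$ by ordinary Lichnerowicz with the psc metric, and for $(\bC\bP^2,\cO(1))$ by the twisted Lichnerowicz estimate $\tfrac14\kappa^{\cO(1)}_{g_{FS}}\ge 6-2>0$ recorded in the text. This is easily repaired, but as written the step is not justified.

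The genuine gap is that the decisive ingredient is never proved, only named. Your reduction ends with the assertion that, $2$-locally, the $\bC\bP^2$-transfer composed with projection to $\Sigma^{-2}(\ko\wedge\bC\bP^\infty)$ covers the kernel of the orientation $\Sigma^{-2}(\ko\wedge\bC\bP^\infty)\to\ku$ --- the ``$\ku$-analogue of Stolz's theorem.'' That statement \emph{is} the substance of Theorem \ref{thm:transfer}; it requires the homology and Adams spectral sequence computations (the role of $\bC\bP^2=S^0\cup_\eta e^2\cup e^4$, the $\cA(1)$- versus $E(1)$-module bookkeeping, and pinning down which class the universal bundle over $B\SU(3)$ realizes) that occupy \cite[Section 5]{BR} and F\"uhring's work, and your proposal offers only the plan ``prove it by Stolz's method,'' not the argument. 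The same criticism applies, more mildly, to the odd-primary step: knowing that $\MSpinc_{(p)}$ is an even, torsion-free wedge of $BP$'s and that the relevant Atiyah--Hirzebruch spectral sequences collapse tells you the size of $\mathrm{Ker}\,\alpha^{\spinc}$, but says nothing about the \emph{image} of the two transfers; one still has to exhibit generators of the kernel as total spaces of $\bC\bP^2$- or $\bH\bP^2$-bundles (or run a $BP$-module argument as in the odd-primary part of Stolz's paper), and calling this ``routine'' skips that computation. So the proposal is a correct reduction of the theorem to its hardest case together with a plausible strategy for that case, but it is not yet a proof.
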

\noindent
The proof of Theorem \ref{thm:transfer} requires some computations with
$\spinc$-bordism groups, see \cite[section 5]{BR}. Then Theorem
\ref{thm:transfer} implies that a $\spinc$-manifold $(M,L)$
$\spinc$-bordant to a union of total spaces $E$ of geometric
$\bC\bP^2$- and $\bH\bP^2$-bundles $E\to B$.

These cases are slightly different. Consider geometric $\bC\bP^2$-bundles
first.  We start with a trivial bundle, the $\spinc$-manifold
$(\bC\bP^2,\cO(1))$.  Here we use the Fubini-Study metric $g_{FS}$
along with the usual connection on the dual of the tautological
bundle.  Then if $\omega$ is the K\"ahler form, this is also the
curvature of the bundle $\cO(1)$ and the Ricci tensor is 6 times the
metric.  It is well-known that $\frac14\kappa_{FS} =6$, and the minimal
eigenvalue of $\cR$ is $-2$, so
$\frac14\kappa_{g_{FS}}^{\cO(1)}=\frac14\kappa_{g_{FS}}+\cR \geq 6-2>0$.

Recall that $\SU(3)$ acts by isometries of the standard Fubini-Study
metric. Then a total space $E$ of a geometric $\bC\bP^2$-bundles $E\to
B$ has a canonical line bundle $L\to E$ which restricts to the bundle
$\cO(1)$ on the fibers. Then by choosing the metric and connection so
that on each fiber, we have a very small multiple of the Fubini-Study
metric $g_{FS}$ and the curvature of the line bundle is the K\"ahler
form, the curvature of the fibers will dominate everything else.

Let $E\to B$ be a geometric $\bH\bP^2$-bundle over a $\spinc$-manifold
$B$, where the structure group $\PSp(3)$ is an isometry group of the
standard metric $g_{\bH\bP^2}$ (of positive curvature).  Then we can
rescale the metric $g_{\bH\bP^2}$ so that its scalar curvature will
dominate everything else. These arguments prove Theorem
\ref{cor:nonspinpsc1}. To prove Theorem \ref{cor:nonspinpsc2} we have
to analyze the image of $\Omega^{\spin}_n$ in the group
$\Omega^{\spinc}_n$.
\subsection{Finding a well-adapted psc-metric on a simply-connected
  $\spinc$-manifold}
Let $(M,L)$ be a $\spinc$-manifold, where $M=X\cup_{\p X=Y} -N(B)$ and
$B$ is dual to $L$. We have identified two primary obstructions
$$ \alpha^{\spinc}(M,L)\in KU_n \ \ \mbox{and} \ \ \ \alpha(B)\in
KO_{n-2},$$ and a secondary obstruction $\alpha^{\mathrm{rel}}(X,Y,
g_Y)\in KO_n$ for existence of a well-adapted psc-metric on
$\spinc$-manifold $(M,L)$.  The secondary obstruction
$\alpha^{\mathrm{rel}}(X,Y, g_Y)$ depends on a choice of a psc-metric
$g_Y$ on $Y$ which is determined by a choice of a psc-metric $g_B$ on
$B$. Then when we say that $\alpha^{\mathrm{rel}}(X,Y, g_Y)$ vanishes,
we mean that \emph{there exists a psc-metric $g_B$ which determines
  the metric $g_Y$ so that $\alpha^{\mathrm{rel}}(X,Y, g_Y)=0$ in
  $KO_n$}.

We emphasize that if $B$ is simply-connected with $\dim B \geq 5$, then if 
$\alpha(B)\in KO_{n-2}$ vanishes, then there exists a psc-metric $g_B$ on $B$.
\begin{theorem}\label{spinc-well-adap}
Let $(M,L)$ be a non-spin $\spinc$-manifold, where
$M=X\cup_{\p X=Y} -N(B)$ and $B$ is dual to the line bundle $L$, where
$M$ and $B$ are simply-connected and $\dim M=n \geq 7$.  Assume that the
primary obstructions $\alpha^{\spinc}(M,L)\in KU_n$ and $\alpha(B)\in
KO_{n-2}$ vanish and the secondary obstruction
$\alpha^{\mathrm{rel}}(X,Y, g_Y)=0$ for some choice of a metric $g_B$
(and, consequently of $g_Y$). Then $(M,L)$ admits a well-adapted
psc-metric.
\end{theorem}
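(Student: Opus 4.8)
The plan is to run a relative version of Stolz's theorem on the spin piece $X$, after first fixing a psc-metric on $B$ that kills the secondary obstruction, and to use the $\spinc$-bordism machinery of Theorems~\ref{thm:bordismspinc}--\ref{thm:transfer} to supply a geometric model along the singular locus $B$. First I would deal with the metric near $B$. Since $B$ is simply connected with $\dim B=n-2\ge 5$ and $\alpha(B)=0$, Stolz's theorem \cite{MR577131,MR1189863} provides psc-metrics on $B$, and by the hypothesis on the secondary obstruction I may choose one, $g_B$, so that the induced metric $g_Y$ on $Y=\p N(B)$ satisfies $\alpha^{\mathrm{rel}}(X,Y,g_Y)=0$ in $KO_n$. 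Here $g_Y$ comes from the $S^1$-invariant metric on the disk bundle $N(B)\to B$ built from $g_B$ by taking sufficiently small round hemispherical fibres, which is psc by (the construction behind) Theorem~\ref{thm:BB} and is a product near $\p N(B)$; thus the metric on $N(B)$ is already well-adapted in the sense of Definition~\ref{def:adaptedmetric}. It then suffices to extend $g_Y$ to a psc-metric $g_X$ on $X$ that is a product in a collar of $\p X=Y$, since gluing $g_X$ to the metric on $N(B)$ produces a well-adapted psc-metric on $M$.

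Next I would reduce the relative extension problem on $X$ to a closed problem. The key auxiliary object is a spin manifold $W$ with $\p W=Y$ carrying a psc-metric $g_W$ extending $g_Y$ and product near the boundary: when $c_1(L|_B)$ is even, $N(B)$ itself is spin and serves as $W$; in general one builds such a $W$ from the hypothesis $\alpha^{\spinc}(M,L)=0$, using Theorem~\ref{thm:bordismspinc} (equivalently Theorems~\ref{cor:nonspinpsc1} and \ref{thm:transfer}) to replace $(M,L)$ up to $\spinc$-cobordism by total spaces of geometric $\bC\bP^2$- and $\bH\bP^2$-bundles carrying metrics with positive $L$-twisted scalar curvature, then cutting the cobordism along the duals of the relevant line bundles and observing that on the spin part the line bundle is trivial, so that $L$-twisted scalar curvature becomes honest scalar curvature there. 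Granting $(W,g_W)$, the closed manifold $V:=X\cup_Y(-W)$ is spin and, by van Kampen (all of $X$, $W$, $Y$ being simply connected), simply connected, with $\dim V=n\ge 7\ge 5$. By additivity of APS indices under gluing along $Y$, the contribution of the psc half $(W,g_W)$ vanishing by the Lichnerowicz identity, one gets $\alpha(V)=\alpha^{\mathrm{rel}}(X,Y,g_Y)=0$, so Stolz's theorem gives a psc-metric on $V$.

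Finally I would transport the psc-metric from $V$ back to $X$ rel $Y$. The set of spin fillings of $Y$ up to cobordism rel $Y$ is a torsor over $\Omega^{\Spin}_n$, and $X$ is obtained from $W$ by "adding" the class $[V]\in\Omega^{\Spin}_n$; since $V$ admits psc and $\dim X=n\ge 7$ with $X$, $Y$ simply connected, the relative Gromov--Lawson--Schoen--Yau surgery theorem lets me perform surgeries of codimension $\ge 3$ in the interior of $X$ — keeping the collar of $Y$, hence $(W,g_W)\subset V$, untouched — and run Stolz's bordism argument to obtain a psc-metric on $X$ restricting to $g_Y$ and product near $Y$. Gluing with the metric on $N(B)$ gives the desired well-adapted psc-metric on $M$.

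The hard part will be the middle step together with the surgery bookkeeping in the last step: one must verify that $\alpha^{\spinc}(M,L)=0$ (with $\alpha(B)=0$) genuinely produces a spin nullbordism of $(Y,g_Y)$ carrying a compatible psc-metric — reconciling the $\spinc$-bundle constructions of Section~\ref{sec:spinc} with the spin cutting along $B$ — so that $\alpha^{\mathrm{rel}}(X,Y,g_Y)$ is the only remaining obstruction on $X$, and one must arrange that every surgery used to carry the metric across the bordism is done in the interior of $X$, away from $N(B)$, so that $S^1$-invariance near $B$ is preserved. The hypothesis $n\ge 7$ is used precisely here: to make $\dim B\ge 5$ in the first step and to leave enough room ($\dim X=n$, $\dim Y=n-1\ge 6$) for the relative surgeries.
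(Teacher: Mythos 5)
Your reduction of the problem to extending $g_Y$ over $X$ (after building the $S^1$-invariant psc-metric on $N(B)$ from a suitable $g_B$) is reasonable, and the final rel-boundary surgery step would be standard \emph{granted} the auxiliary filling; but the middle step has a genuine gap. You need a spin manifold $W$ with $\p W=Y$ carrying a psc-metric which is a product near the boundary with boundary metric the \emph{specific} $g_Y$ determined by your chosen $g_B$ (at least up to concordance); only then does APS additivity give $\alpha(V)=\alpha^{\mathrm{rel}}(X,Y,g_Y)$ for $V=X\cup_Y(-W)$. The construction you sketch does not produce this: Theorems \ref{thm:bordismspinc}, \ref{cor:nonspinpsc1} and \ref{thm:transfer} operate entirely on closed manifolds up to $\spinc$-bordism and yield metrics with $\kappa^L_g>0$ on \emph{some} closed representatives; ``cutting the cobordism along duals of line bundles'' gives no control over which boundary you create, and no control whatsoever over the induced boundary metric, which would have to agree with $g_Y$. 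The special case you invoke ($N(B)$ spin, so that $N(B)$ itself serves as $W$) occurs only when $c_1(L|_B)$ is even, which is not the typical situation here. Worse, given the rest of your machinery, the existence of such a $(W,g_W)$ is essentially \emph{equivalent} to the statement you are trying to prove on $X$ (that vanishing of $\alpha^{\mathrm{rel}}(X,Y,g_Y)$ permits extending $g_Y$ to a psc-metric on $X$, product near the boundary) --- i.e., to a relative, rel-boundary form of Stolz's theorem. So, as written, the argument is circular at its crucial point unless you import that relative theorem from the literature, in which case the detour through $W$ and $V$ becomes unnecessary.

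For comparison, the paper avoids filling $(Y,g_Y)$ altogether: it shows $(M,L)$ is $\spinc$-bordant to $M''\sqcup(M',L')$ with $M''$ closed spin, $\alpha(M'')=0$, and $(M',L')$ a simply connected $\spinc$-pair admitting a \emph{well-adapted} psc-metric (by a refinement of Theorem \ref{cor:nonspinpsc2}), arranged so that the Bocksteins match, $(B,L|_B)\sim(B',L'|_{B'})$ in $\Omega^{\spin}_{n-2}(\bC\bP^\infty)$; it then pushes the well-adapted metric through the bordism in two stages --- first codimension-$\ge 3$ surgeries converting $B'$ to $B$, carrying the $S^1$-invariant metric on the tubular neighborhood, then interior surgeries, away from $N(B)$, to move the metric from $X'$ to $X$. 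If you wish to keep your strategy, you must either prove or correctly cite the rel-boundary existence theorem for simply connected spin manifolds of dimension $\ge 6$ with psc boundary metric; with that in hand your first and last steps do complete the proof, but the construction of $W$ via the $\spinc$ transfer theorems should be deleted, since it cannot deliver the required boundary data.
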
  
\begin{proof}
Since $\alpha^{\spinc}(M,L)$ vanishes in $KU_n$, Theorem
\ref{cor:nonspinpsc2} says that $M$ admits a psc-metric $g$ so that
$\kappa^{L}_{g}$ is positive definite.  This by itself is not good enough,
since this metric may not be well-adapted with respect to $B$.

However an analysis of relevant bordism groups shows that
$(M,L)$ is $\spinc$-bordant to a disjoint union $M''\sqcup (M',L')$
in the following sense:  the manifold $M''$ is a closed $\spin$ manifold,
and $(M',L')$ is a $\spinc$ pair, such that $L'$ is trivial
away from another closed spin manifold $B'$, and
\begin{equation*}
(B,L|_B)\sim (B',L'|_{B'}) \ \ \mbox{in} \ \ \ \Omega_{n-2}^\spin(\bC\bP^\infty)
\end{equation*}
while $M\sim M''\sqcup M'$ in $\Omega_{n}^{\spinc}$. We can also take
$M'$ and $M''$ to be simply connected. Then, since the $\alpha$
invariants only depend on $\spin/\spinc$ bordism classes and are linear
on the bordism groups, $\alpha^{\spinc}(M',L')=0$ and $\alpha(M'')=0$.
Then we can construct $M'$ so that it has a well-adapted psc-metric,
by a slight refinement of Theorem \ref{cor:nonspinpsc2}.  Also, $M''$
has a psc-metric by Stolz's Theorem.  Putting everything together, we
can push the well-adapted psc-metric on $M''\sqcup M'$ through the
bordism to get a well-adapted psc-metric on $M$, using the
Gromov-Lawson surgery technique.  First, we have to do 
codimension $3$ surgeries on $B'$ to convert it to $B$, and use these
surgeries to push the metric on the tubular neighborhood.  Then do
surgeries on the interior to push the psc-metric from $X'$ to $X$.
\end{proof}

\section{More General Manifolds with Singularities}
\label{sec:sing}
The situations discussed in sections \ref{sec:pin} and \ref{sec:spinc},
as well as the paper \cite{MR1857524}, lead to a more general subject,
the classification of ``manifolds with singularities'' or
\emph{singular spaces}
admitting a psc-metric.  Here by ``manifolds with singularities'' we mean
compact Hausdorff spaces with a stratification, where the strata
are locally closed subspaces (hence locally compact) which are themselves
smooth manifolds, and usually one adds a condition on the local structure of
a neighborhood of each stratum in a larger one.  Various categories
of manifolds with singularities are discussed in some detail in the book
\cite{MR1308714}.  The prototypes of singular spaces, which are good
examples to keep in mind, are either projective varieties over $\bR$
or $\bC$ which are not necessarily smooth, or else quotient spaces
$M/G$ of a smooth manifold by a smooth action of a Lie group $G$,
in the case where there can be more than one orbit type.
Here we will simplify the discussion by restricting
attention to the case of only two strata.  Thus if $M$ is such a singular
space, $M$ has a dense open subset $\mathring M$ which is a smooth
manifold, and $M\smallsetminus \mathring M = \beta M$ is a closed
manifold of smaller dimension (possibly disconnected).
The reasons for the local neighborhood condition are:
\begin{enumerate}
\item Such conditions hold in the two kinds of prototypes: algebraic
  varieties and quotients of smooth actions.
\item Without such a condition one can have very wild examples.  For example,
  take a smooth manifold $M$ and collapse some closed subset $X$ to a
  point.  The quotient space $M/X$ is the union of the open manifold
  $\mathring M = M\smallsetminus X$ and a point, but if $X$ is pathological,
  the neighborhood of the singular point can be very complicated.
\end{enumerate}  

In the rest of this section we will consider singular manifolds
(more precisely, ``pseudomanifolds'') with exactly two strata,
$\mathring M$ and $M\smallsetminus \mathring M = \beta M$, such that
$\beta M$ has a tubular neighborhood in $M$ homeomorphic to a
fiber bundle $c(L) \to N(\beta M)\to \beta M$, where the
fibers are the cone $c(L)= L\times[0,1]/(L\times \{0\})$
on a fixed closed Riemannian manifold $(L,g_L)$, called
the ``link,'' and the structure group of the bundle is contained in
the isometry group of $(L,g_L)$ (extended to act on $c(L)$
so as to preserve the distance to the cone point).
The fiber bundle has a natural
section embedding $\beta M$ in $N(\beta M)$ as the union of the
``vertex points'' of the cone fibers.  Unless $L$ is a sphere,
which \emph{was} the case in sections \ref{sec:pin} and \ref{sec:spinc},
where we had $L=S^0$ in section \ref{sec:pin} and $L=S^1$ in section
\ref{sec:spinc}, such
a pseudomanifold is generally not even homeomorphic to a topological manifold
(let alone to a smooth manifold), so it certainly doesn't admit Riemannian
metrics in the usual sense.  So if $L$ is not a sphere,
what do we mean by a psc-metric on $M$?
Extrapolating from the cases we have discussed in
sections \ref{sec:pin} and \ref{sec:spinc}, we restrict attention to
``well-adapted'' metrics with respect to the neighborhood structure
near the singular stratum. That means we impose the following
conditions.
\begin{definition}
\label{def:adaptedmetric2}
Let $M^n$ be a stratified (compact) pseudomanifold with two strata
$\mathring M$ and $M\smallsetminus \mathring M = \beta M$ as above,
with $\beta M$ having a tubular neighborhood $N(\beta M)$ which is a
$c(L)$-bundle over $\beta M$, for a fixed link $L$. Recall that
$\mathring M$ is a smooth $n$-manifold and that $\beta M$ is a closed
manifold of smaller dimension.  A \emph{well-adapted
metric} on $M$ will mean the following:
\begin{enumerate}
\item A choice of a Riemannian metric $g$ on $\mathring M$ and of a Riemannian
metric $g_\beta$ on $\beta M$, such that:
\item the restriction of $g$ to
$X = M\smallsetminus \text{int}\,N(\beta M)$ is a product metric
$g_{\p X} + dt^2$ in a collar neighborhood of $\p X$,
\item the map $p\co (\p X,g_{\p X}) \to (\beta M, g_\beta)$
is a Riemannian submersion with the given metric on $L$ as
the ``vertical metric'' on the fibers, and
\item in a slightly smaller neighborhood of $\beta M$, with fibers
$L\times[0,1-\varepsilon]/(L\times \{0\})$, $g$ has the local form
$dr^2+ r^2g_L+p^*g_\beta$, where $r$ is the distance to the vertex of
the cone in $c(L)$.
\end{enumerate}
\end{definition}
One can easily see that Definition \ref{def:adaptedmetric2} specializes
to Definition \ref{def:adaptedmetric1} when $L=S^0$.
\begin{definition}
\label{def:adaptedpscmetric}
If $M^n$ is a stratified pseudomanifold with two strata as in Definition
\ref{def:adaptedmetric2}, we say that a \emph{well-adapted psc-metric}
on $M$ is a well-adapted metric on $M$ in the sense of Definition
\ref{def:adaptedmetric2}, such that $g$ and $g_\beta$ are psc-metrics
on $\mathring M$ and $\beta M$, respectively. Again this agrees with
our earlier terminology.
\end{definition}
The basic question we want to study is this:
\begin{question}
\label{q:adpatedpsc1}  
  Suppose $M^n$ is a (compact) pseudomanifold with two strata as above.
  Clearly, if $M$ admits a well-adapted psc-metric, then
  the closed manifold $\beta M$ admits a psc-metric.  What additional
  conditions are needed to ensure the converse, at least if $n$ is
  sufficiently large?
\end{question}

We have studied this question in \cite{BJ,BJP,BJP2}.  Basically, in order
to get any good results on this question, we want the link manifold $L$
to be a homogeneous space for a compact Lie group $G$, with the metric
on $L$ to be $G$-invariant.  Such homogeneous spaces always have
nonpositive curvature, and there are two rather different cases to
consider: the case where $G$ is a torus, in which case $L$ itself
is necessarily a torus and we might as well take $G=L$
with a flat invariant metric, or the
case where $G$ is compact semisimple, in which case $L$ is a manifold
of $G$-invariant positive sectional curvature.  The prototype of the
first case is the case where $G=L=S^1$, treated in section \ref{sec:spinc}
above and in more detail in \cite{BJ}.  In this case, since the cone on
a circle is a disk, our pseudomanifold $M$ is actually a smooth manifold,
and a well-adapted psc-metric on $M$ is in particular a psc-metric on $M$
in the usual sense for smooth manifolds.  If $M$ has a spin structure
then $\alpha(M)$ is an obstruction to such a metric, in addition to
whatever obstruction there might be to a psc-metric on $\beta M$.
However, if we decompose $M$ as $X\cup_{\p X} N(\beta M)$ as above,
and if $\p X$ and $\beta M$ are both simply connected, then from the
Gysin sequence and long exact homotopy sequence of the circle bundle
$S^1\to \p X\to \beta M$, one can see that $c_1$ of the circle bundle
(which one can identify with $c_1$ of a complex line bundle for which
$N(\beta)$ is the unit disk bundle) has to be non-zero mod $2$,
and thus $\beta M$ and $M$ cannot be spin simultaneously.  But if can happen,
for example, that $M$ is spin and $\beta M$ is spin$^c$.  An example
given in \cite[Remark 3.6]{BJ} is the case where $M=\bC\bP^5\# \Sigma^{10}$,
where $\Sigma^{10}$ is a homotopy sphere with $\alpha(\Sigma^{10})\ne 0$
in $KO_{10}\cong \bZ/2$, and $\beta M=\bC\bP^4$, which obviously admits
a psc-metric.  In this case, the $\alpha$-invariant shows that $M$
does not admit any psc-metric, let alone a well-adapted one.

A case not discussed in \cite{BJ} is the case where $L$ is a
higher-dimensional torus.  In this case, the ``singular manifold'' is
genuinely singular, since the cone on $L$ is not locally Euclidean.
In this case, \emph{well-adapted psc-metrics never exist}, because
of the following calculation:
\begin{lemma}[{\cite[Lemma 3.1]{BJP}}]
\label{lem:scalofcone}
The scalar curvature function on the cone $c(L)$ on $L$ with the
conical metric $dr^2+ r^2g_L$ {\lp}with the
vertex $r=0$ of the cone deleted{\rp} is $(\kappa_L-\kappa_\ell)r^{-2}$,
where $\kappa_L$ is the scalar curvature of $L$ and $\kappa_\ell$
is the scalar curvature of a
standard round sphere $S^\ell(1)$ of radius $1$, $\ell=\dim L$.
\end{lemma}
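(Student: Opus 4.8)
The plan is to recognize the conical metric $dr^2+r^2g_L$ on the punctured cone $c(L)\setminus\{r=0\}\cong(0,\infty)\times L$ as a warped product metric, with one-dimensional base $\bigl((0,\infty),dr^2\bigr)$, fiber $(L,g_L)$, and warping function $f(r)=r$, and then to apply the classical warped-product curvature formulas (see, e.g., O'Neill, \emph{Semi-Riemannian Geometry}, Ch.~7) and take a trace.

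First I would record the warped-product Ricci identities for $B\times_f F$ with $\ell=\dim F$: for vectors $X,Y$ tangent to the base, $\mathrm{Ric}(X,Y)=\mathrm{Ric}^B(X,Y)-\tfrac{\ell}{f}\,\mathrm{Hess}_f(X,Y)$; a base direction is Ricci-orthogonal to a fiber direction; and for $V,W$ tangent to the fiber, $\mathrm{Ric}(V,W)=\mathrm{Ric}^F(V,W)-\bigl(\tfrac{\Delta_B f}{f}+(\ell-1)\tfrac{|\nabla f|_B^2}{f^2}\bigr)\,g(V,W)$. Specializing: the base is $1$-dimensional, so $\mathrm{Ric}^B\equiv 0$; and $f(r)=r$ forces $f''\equiv 0$, hence $\mathrm{Hess}_f\equiv 0$ and $\Delta_B f\equiv 0$, while $|\nabla f|_B^2\equiv 1$. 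Therefore $\mathrm{Ric}(\partial_r,\partial_r)=0$, and on fiber directions $\mathrm{Ric}(V,W)=\mathrm{Ric}^F(V,W)-\tfrac{\ell-1}{r^2}\,g(V,W)$.

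Next I would take the scalar trace with respect to $g$. If $\tilde e_1,\dots,\tilde e_\ell$ is a $g_L$-orthonormal frame on $L$, then $e_i:=r^{-1}\tilde e_i$ together with $\partial_r$ is a $g$-orthonormal frame, and bilinearity of $\mathrm{Ric}^F$ gives $\sum_i\mathrm{Ric}^F(e_i,e_i)=r^{-2}\sum_i\mathrm{Ric}^F(\tilde e_i,\tilde e_i)=r^{-2}\kappa_L$, while $\sum_i g(e_i,e_i)=\ell$. Hence
\[
\kappa=\mathrm{Ric}(\partial_r,\partial_r)+\sum_{i=1}^{\ell}\mathrm{Ric}(e_i,e_i)=\frac{\kappa_L}{r^2}-\ell\cdot\frac{\ell-1}{r^2}=\frac{\kappa_L-\ell(\ell-1)}{r^2}.
\]
Finally, since the unit sphere $S^\ell(1)$ has constant sectional curvature $1$, its scalar curvature is $\kappa_\ell=\ell(\ell-1)$, so the right-hand side equals $(\kappa_L-\kappa_\ell)r^{-2}$, as asserted; as a consistency check, taking $L=S^\ell(1)$ makes $c(L)$ flat Euclidean space and the formula returns $0$.

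The computation has no conceptual obstacle; the only thing to watch is bookkeeping of conventions --- fixing the signs of the Laplacian and of the curvature tensor to match whichever reference is cited, and, above all, remembering that the fiber trace is taken in the \emph{rescaled} metric $r^2g_L$, which is precisely what turns each copy of $\mathrm{Ric}^F$ into $r^{-2}\mathrm{Ric}^F$ and thus produces the overall $r^{-2}$. If one prefers a self-contained derivation avoiding citations, the same answer drops out of the moving-frame calculation with coframe $e^0=dr$, $e^i=r\,\omega^i$ (with $\omega^i$ a $g_L$-orthonormal coframe on $L$): the first structure equations give connection $1$-forms $\omega^i{}_0=\omega^i$ and $\omega^i{}_j=(\omega_L){}^i{}_j$, and the second structure equations then yield the curvature $2$-forms whose trace is the stated scalar curvature --- longer, but entirely mechanical.
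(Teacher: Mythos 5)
Your computation is correct: viewing $c(L)\setminus\{r=0\}$ as the warped product $(0,\infty)\times_r L$ and tracing the standard warped-product Ricci formulas (with $f(r)=r$, so $\mathrm{Hess}_f=0$, $\Delta_B f=0$, $|\nabla f|^2=1$) gives exactly $\kappa=(\kappa_L-\ell(\ell-1))r^{-2}=(\kappa_L-\kappa_\ell)r^{-2}$, and your trace bookkeeping in the rescaled frame $e_i=r^{-1}\tilde e_i$ is handled properly. The survey does not reproduce a proof but simply cites \cite[Lemma 3.1]{BJP}, and the argument there is this same standard cone/warped-product curvature computation, so your proposal matches the intended proof.
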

\begin{corollary}
\label{cor:higherdimtorus}
If $L$ is flat and $\dim L>1$, then the scalar curvature of $c(L)$
tends to $-\infty$ as $r\to 0$, and thus a well-adapted psc-metric
is impossible.
\end{corollary}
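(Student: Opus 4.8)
The plan is to deduce the corollary directly from Lemma~\ref{lem:scalofcone}. First I would pin down the relevant constant. A flat Riemannian manifold $L$ has scalar curvature $\kappa_L \equiv 0$, while the standard round sphere $S^\ell(1)$ of radius $1$ has scalar curvature $\kappa_\ell = \ell(\ell-1)$, which is strictly positive precisely when $\ell = \dim L > 1$. Hence, by Lemma~\ref{lem:scalofcone}, the conical metric $dr^2 + r^2 g_L$ on $c(L)$ (with the vertex $r=0$ deleted) has scalar curvature $(\kappa_L-\kappa_\ell)r^{-2} = -\ell(\ell-1)\,r^{-2}$, which tends to $-\infty$ as $r\to 0^+$.

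Next I would transfer this to the scalar curvature of an arbitrary well-adapted metric $g$ on $M$. By condition~(4) of Definition~\ref{def:adaptedmetric2}, in the region $\{\,0<r<1-\varepsilon\,\}$ around $\beta M$ the metric $g$ has the local form $dr^2 + r^2 g_L + p^*g_\beta$, which exhibits the projection $p$ there as a Riemannian submersion onto $(\beta M, g_\beta)$ whose fibers are totally geodesic and isometric to truncated cones $(c(L), dr^2 + r^2 g_L)$. Applying O'Neill's formula for the scalar curvature of a Riemannian submersion with totally geodesic fibers, $\kappa_g = \kappa_{\mathrm{fiber}} + (\kappa_{g_\beta})\circ p - |A|^2$, where $A$ is the integrability tensor of the horizontal distribution. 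Since $|A|^2 \ge 0$ and $\kappa_{g_\beta}$ is bounded above on the compact manifold $\beta M$, while $\kappa_{\mathrm{fiber}} = -\ell(\ell-1)r^{-2}$ by the previous step, we obtain $\kappa_g \le -\ell(\ell-1)r^{-2} + \sup_{\beta M}\kappa_{g_\beta} \longrightarrow -\infty$ as $r\to 0^+$. In particular $\kappa_g$ takes negative values arbitrarily close to $\beta M$, so $g$ is not a psc-metric on $\mathring M$, and therefore by Definition~\ref{def:adaptedpscmetric} $M$ admits no well-adapted psc-metric.

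The one point needing care, and the \textbf{main obstacle}, is the claim that the fibers of $p$ are totally geodesic with $|A|^2$ remaining bounded as $r\to 0$, since in the honest picture $N(\beta M)$ is a $c(L)$-bundle carrying a connection rather than a literal metric product. Here one uses that the structure group lies in $\mathrm{Isom}(L,g_L)$ and that the link metric $g_L$ is fixed: the connection $1$-form and its curvature are independent of the cone coordinate $r$, so neither the vertical/horizontal splitting nor $A$ can develop a compensating $r^{-2}$-blow-up, and the $r^{-2}$ term of the cone fiber genuinely dominates. Equivalently, one may avoid O'Neill altogether and write $\kappa_g$ at a point at cone-radius $r$ as a sum of sectional curvatures over a $g$-orthonormal coordinate frame split into $\partial_r$, the $\ell$ fiber directions, and the horizontal directions: the sub-sum over the first $\ell+1$ directions equals the cone's intrinsic scalar curvature $-\ell(\ell-1)r^{-2}$ up to a Gauss-equation term that vanishes for totally geodesic fibers, while every remaining mixed or purely horizontal sectional curvature is $O(1)$ in $r$; summing yields $\kappa_g = -\ell(\ell-1)r^{-2} + O(1)$, with the same conclusion.
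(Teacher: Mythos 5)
Your proof is correct and follows essentially the same route as the paper: substitute $\kappa_L=0$ and $\kappa_\ell=\ell(\ell-1)$ into Lemma \ref{lem:scalofcone}, then note that a well-adapted metric is locally a product of the cone and $(\beta M,g_\beta)$ up to fiber-bundle (O'Neill $A$-tensor) corrections, which cannot compensate the $-\ell(\ell-1)r^{-2}$ blow-up. Your extra care about totally geodesic fibers and the boundedness of $|A|^2$ simply makes explicit the "small corrections coming from the curvature of the fiber bundle" that the paper's proof invokes.
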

\begin{proof}
Just take $\kappa_L=0$ and $\kappa_\ell=\ell(\ell-1)$ in the above formula.
For the application to well-adapted metrics, observe that such a metric
is locally a Riemannian product of $c(L)$ and $\beta M$ up to
small corrections coming from the curvature of the fiber bundle,
and so there is no way to overcome the hugely negative scalar curvature
on the cone.
\end{proof}

The results in \cite{BJP,BJP2} have to do with the other case where
$L$ is a homogeneous space of a compact semisimple Lie group $G$, and the
bundle $p\co \p X\to \beta M$ is an associated bundle $P\times_G L$
to a principal $G$-bundle $G\to P\to \beta M$.  Because of Lemma 
\ref{lem:scalofcone}, we take the scalar curvature of $L$
to be equal to $\kappa_\ell=\ell(\ell-1)$ (if you like, this is a
normalization of the cone angle) so that the cone $c(L)$ is scalar-flat.
This ensures that if $\beta M$ has a psc-metric, we can lift this
metric to a well-adapted psc-metric on the tubular neighborhood
$N(\beta M)$.  This follows from an application of the O'Neill
formulas for the curvature of a Riemannian submersion.

\begin{proposition}[{\cite[Theorem 3.5]{BJP}}]
\label{prob:obstr}
Let $L = G/H$ be a homogeneous space, $\dim L = \ell$, where $G$
is a connected compact semisimple Lie group, and $g_L$ be a $G$-invariant
Riemannian metric on $L$ of constant scalar curvature equal to
$\kappa_\ell=\ell(\ell-1)$.  Let $M=X\cup_{\p X}N(\beta M)$ be a compact
pseudomanifold of dimension $n$ with two strata,
$\mathring M\cong \text{int}\,X$ and $\beta M$, where $\p X= P\times_G L$,
$P$ a principal $G$-bundle over $\beta M$.  If $M$ admits a well-adapted
metric of positive scalar curvature, then $\beta M$ admits a psc-metric.
\end{proposition}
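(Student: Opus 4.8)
The plan is to localize near the singular stratum and reduce the assertion to one application of the O'Neill submersion curvature formula. By Definition \ref{def:adaptedmetric2}(4), a well-adapted metric $g$ on $M$ has, on a punctured collar $\{0<r<1-\varepsilon\}$ of $\beta M$ inside $N(\beta M)$, the explicit form $g = dr^2 + r^2 g_L + p^{*}g_\beta$, the three summands being supported on the $\partial_r$-direction, the vertical tangents to the $L$-fibres, and the horizontal lift (via the connection on the principal bundle $P$) of $T\beta M$, respectively. First I would note that this exhibits the bundle projection $p$, restricted to this collar, as a Riemannian submersion whose fibre over $b\in\beta M$ is the truncated cone $\bigl(c(L)\setminus\{\text{vertex}\},\, dr^2+r^2 g_L\bigr)$ and whose base is $(\beta M,g_\beta)$. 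Since the fibre metric is literally independent of $b$ — the structure group lies in $\mathrm{Isom}(L,g_L)$, which acts isometrically on $c(L)$ preserving the distance to the vertex — Vilms' theorem applies and these cone fibres are totally geodesic.

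Next I would assemble the two ingredients that feed into the O'Neill formula. By Lemma \ref{lem:scalofcone} the scalar curvature of the cone fibre at radius $r$ is $(\kappa_L-\kappa_\ell)r^{-2}$, and by hypothesis $g_L$ has constant scalar curvature $\kappa_L=\kappa_\ell$, so every fibre is scalar-flat. The O'Neill identity for a Riemannian submersion with totally geodesic fibres then reads, pointwise on the collar,
\[
\kappa_g \;=\; (\kappa_{g_\beta}\circ p) \;+\; \kappa_{\mathrm{fibre}} \;-\; |A|^2 \;=\; (\kappa_{g_\beta}\circ p) \;-\; |A|^2,
\]
where $A$ is the O'Neill integrability tensor of the horizontal distribution and $|A|^2\ge 0$. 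Since the well-adapted metric has positive scalar curvature, $\kappa_g>0$ on this collar (which lies in $\mathring M$), whence $\kappa_{g_\beta}\circ p > |A|^2\ge 0$; as $p$ is surjective this forces $\kappa_{g_\beta}>0$ everywhere on $\beta M$, i.e.\ $g_\beta$ is itself a psc-metric, which is exactly the claim. (Note that the argument in fact works at any fixed small radius $r$, with no limiting process needed.)

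Because the computation is immediate once the structural input is in place, the real content lies in two geometric points, which are where I expect the only subtleties. The first is checking rigorously that the normal form of Definition \ref{def:adaptedmetric2}(4) defines a genuine Riemannian submersion onto $(\beta M,g_\beta)$ with totally geodesic cone fibres; this is precisely Vilms' construction, and it relies on $G$ acting by isometries of $(L,g_L)$, extended isometrically to $c(L)$. The second is pinning down the correct form of the O'Neill scalar-curvature formula in the totally geodesic case — in particular the sign of the $|A|^2$ term, which is exactly what upgrades the naive conclusion $\kappa_{g_\beta}\ge 0$ to the strict inequality $\kappa_{g_\beta}>0$. I would also flag that the semisimplicity hypothesis on $G$ enters only through the normalization $\kappa_L=\kappa_\ell$: it is what allows $g_L$ to be rescaled so that the cone $c(L)$ is scalar-flat, which fails when $G$ is a higher-dimensional torus — consistent with Corollary \ref{cor:higherdimtorus}.
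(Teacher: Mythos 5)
Your argument is correct and is essentially the approach the paper relies on (via the citation to \cite{BJP}): near $\beta M$ the well-adapted metric is a connection (Kaluza--Klein) submersion metric whose cone fibres are totally geodesic and, by the normalization $\kappa_L=\kappa_\ell$ together with Lemma \ref{lem:scalofcone}, scalar-flat, so O'Neill's formula $\kappa_g=\kappa_{g_\beta}\circ p+\kappa_{\mathrm{fibre}}-|A|^2$ forces $\kappa_{g_\beta}>0$ on all of $\beta M$. The only remark worth adding is that this survey quotes the result without proof, and its Definition \ref{def:adaptedpscmetric} already builds positivity of $g_\beta$ into the term ``well-adapted psc-metric,'' so what you prove is the substantive version --- assuming only $\kappa_g>0$ on $\mathring M$ --- which is precisely the content of the cited theorem.
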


When $X$ and $\beta M$ are both $\spin$ manifolds, this gives us two
obstructions to a well-adapted psc-metric, the $\alpha$-invariant
of $\beta M$ (or its ``higher'' generalization if $\beta M$ is not
simply connected), and the secondary invariant
$\alpha^{\mathrm{rel}}(X,\p X, g_{\p X})$, which depends on the choice of a
psc-metric on $\beta M$ (or equivalently, on the Riemannian submersion
metric on $\p X$).

In the papers \cite{BJP,BJP2}, we were able to show that the vanishing
of these obstructions is sometimes sufficient for $M$ to admit a well-adapted
psc-metric.  Not only that, but in some cases we are able to obtain
information on the topology of space of well-adapted psc-metrics on $M$.
For details, we refer the reader to those other papers, but here we just
give an indication of some of the key techniques.

The main method for proving existence of well-adapted psc-metrics
is to introduce a suitable notion of (spin) bordism for pseudomanifolds
in the appropriate category, giving rise to an exact sequence of
bordism groups of the form
\begin{equation}
\label{eq:bordismseq}
\cdots \to \Omega^{\spin}_{n} \xrightarrow{i}  
\Omega_n^{\spin,  (L,G)\fb}
\xrightarrow{\beta} \Omega^{\spin}_{n-\ell-1}(BG)
\xrightarrow{t} \Omega^{\spin}_{n-1} \to \cdots.
\end{equation}
Here $\Omega_n^{\spin,  (L,G)\fb}$ is the bordism group of $n$-dimensional
spin pseudomanifolds with $(L,G)$-fibered singularities, i.e., with
the structure we have been talking about, where the fibration
$\p X\to \beta M$ comes from a principal $G$-bundle over $\beta M$.
The ``inclusion'' map $i$ comes from viewing a closed spin manifold
as such a pseudomanifold with empty singularities, and the ``Bockstein''
map $\beta$ sends $(M,X,\p X\to \beta M)$ to the bordism class of $\beta M$
with its map to $BG$ classifying the principal $G$-bundle that defines
the $(L,G)$-fibered singularity structure.  The ``transfer map'' $t$
sends the bordism class of $\beta M\to BG$ to the total space of the
associated $L$-bundle.  The exact sequence \eqref{eq:bordismseq},
along with its generalization to the case where the manifolds
are not simply connected, along with the surgery method of Gromov
and Lawson, is the main technical tool for proving positive results.

Here is the statement of the main theorem of \cite{BJP}:
\begin{theorem}[{Existence Theorem \cite[Theorem 1.2]{BJP}}]
\label{thm:pscsufficiency} 
Let $M=X\cup_{\p X}N(\beta M)$ be an $n$-dimensional compact
pseudomanifold with $X$ and $\beta M$ spin and simply connected.
Assume that the fiber bundle $\phi\co \p M\to \beta M$ has fiber
$L=G/H$ and is the associated bundle to a principal $G$-bundle
over $\beta M$, where $G$ is a simply connected compact Lie group.
Furthermore,
assume $n\geq \ell+6$, where $\ell=\dim L$, and that one of the
following condition holds:
\begin{enumerate}
\item[{\rm (i)}] either $L$ is a spin $G$-boundary of a manifold
    $\bar L$ equipped with a psc-metric $g_{\bar L}$,
    which is a product near the boundary and satisfies $g_{\bar L}|_L=g_L$;
\item[{\rm (ii)}] or $\p M=\beta M\times L$,
    where $L$ is an even quaternionic projective space.
\end{enumerate}
Then the vanishing of the primary and secondary obstruction invariants
$\alpha(\beta M) \in KO_{n-\ell-1}$ and
$\alpha^{\mathrm{rel}}(X,\p X, g_{\p X})\in KO_n$
implies that $M$ admits an adapted psc-metric.
\end{theorem}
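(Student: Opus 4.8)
The plan is to combine the Gromov--Lawson surgery technique with the bordism exact sequence \eqref{eq:bordismseq} for spin pseudomanifolds with $(L,G)$-fibered singularities. The key black box is a \emph{Bordism Theorem} in this category, proved exactly as in the closed and in the $\spinc$ cases: once $n\ge\ell+6$, an $n$-dimensional spin pseudomanifold with $(L,G)$-fibered singularities carries a well-adapted psc-metric as soon as its class in $\Omega_n^{\spin,\,(L,G)\fb}$ has \emph{some} representative that does. This requires adapting the surgery method so that surgeries on the interior stratum $\mathring M$ (in codimension $\ge 3$) and surgeries on the singular stratum $\beta M$ (in codimension $\ge 3$, dragging along the principal $G$-bundle, the conical fiber $c(L)$, and the Riemannian submersion metric onto $\beta M$) can be carried out compatibly across the collar $\p X$; the inequality $n\ge\ell+6$ is precisely what leaves room for both. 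Granting this, it suffices to find a pseudomanifold $M_1$, bordant to $M$ in $\Omega_n^{\spin,\,(L,G)\fb}$, that carries a well-adapted psc-metric.

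First I would build the metric near the singularity. Since $\alpha(\beta M)=0$ and $\beta M$ is simply connected of dimension $n-\ell-1\ge 5$, Theorem \ref{thm:Stolz} supplies a psc-metric on $\beta M$; by hypothesis we may take it to be the metric $g_\beta$ for which the secondary obstruction $\alpha^{\mathrm{rel}}(X,\p X,g_{\p X})$ vanishes. Using the normalization $\kappa_L=\kappa_\ell=\ell(\ell-1)$ of Lemma \ref{lem:scalofcone}, so that the conical fiber $c(L)$ is scalar-flat, and applying the O'Neill formulas to the Riemannian submersion $\p X=P\times_G L\to\beta M$ as in Proposition \ref{prob:obstr}, the metric $g_\beta$ lifts to a well-adapted psc-metric on the tubular neighborhood $N(\beta M)$, inducing the psc-metric $g_{\p X}$ on $\p X$.

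Next I would produce the model and pin down the residual obstruction. In case (i), set $W:=P\times_G\bar L$, a smooth spin manifold with $\p W=P\times_G L=\p X$; shrinking the fiber metric $g_{\bar L}$ and using O'Neill once more endows $W$ with a psc-metric that is a product near $\p W$ and restricts to $g_{\p X}$. Let $M_1:=N(\beta M)\cup_{\p X}(-W)$. This is a spin pseudomanifold with $(L,G)$-fibered singularity $\beta M$, it carries a well-adapted psc-metric (glue the metric on $N(\beta M)$ to that on $-W$ along the common product collar), and by construction $\beta[M_1]=[\beta M\to BG]=\beta[M]$ in $\Omega^{\spin}_{n-\ell-1}(BG)$. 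Exactness of \eqref{eq:bordismseq} then gives $[M]-[M_1]\in\image i$; a cut-and-reglue argument along $\p X$ identifies it concretely as $i[\widehat M]$, where $\widehat M:=X\cup_{\p X}(-W)$ is a closed spin $n$-manifold, simply connected because $X$ and $\p X$ are and because $G$ and $\beta M$ simply connected force $W=P\times_G\bar L$ to be simply connected (after, if necessary, a preliminary interior surgery on $\bar L$). By the gluing formula for $\Cl_n$-linear APS indices, $\alpha(\widehat M)=\alpha^{\mathrm{rel}}(X,\p X,g_{\p X})+\alpha^{\mathrm{rel}}(-W,-\p X,g_{\p X})$, and the second term vanishes since $W$ already has a psc-metric that is a product near its boundary; hence $\alpha(\widehat M)=0$. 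As $\widehat M$ is closed spin simply connected of dimension $n\ge\ell+6>5$, Theorem \ref{thm:Stolz} gives it a psc-metric. Therefore $M$ is bordant in $\Omega_n^{\spin,\,(L,G)\fb}$ to $M_1\sqcup\widehat M$, both summands carrying well-adapted psc-metrics ($\widehat M$ as a pseudomanifold with empty singular stratum), and the Bordism Theorem concludes the proof. Case (ii), where $L$ is an even quaternionic projective space and therefore cannot serve as a $G$-boundary, runs in parallel, with the fiber-filling construction of $M_1$ and, more generally, the psc representatives needed for the Bordism Theorem obtained instead from Stolz's $\bH\bP^2$-bundle transfer $T^{\spin}$: even quaternionic projective spaces are exactly the building blocks of $\ker\alpha$ in the relevant range, and the total spaces of associated $\bH\bP^2$-bundles carry fiberwise Fubini--Study psc-metrics, so the image of the transfer map in \eqref{eq:bordismseq} is represented by well-adapted psc pseudomanifolds.

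The main obstacle is the Bordism Theorem itself, that is, performing Gromov--Lawson surgery within the fibered-singularity category: one must surger the interior and the singular stratum simultaneously while preserving near the moving singular set the whole local package (conical scalar-flat fiber $c(L)$, Riemannian submersion onto $\beta M$, product collar along $\p X$) and verifying that all surgeries stay in codimension $\ge 3$ on both strata, which is where $n\ge\ell+6$ is consumed. A secondary technical point is the APS-gluing identity $\alpha(\widehat M)=\alpha^{\mathrm{rel}}(X,\p X,g_{\p X})$, which has to be set up carefully for $\Cl_n$-linear Dirac operators with values in $\KO$, together with, in case (ii), the verification that iterated $\bH\bP^2$-bundles suffice to realize the image of the transfer map by psc pseudomanifolds.
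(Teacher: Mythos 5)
Your case (i) is essentially the paper's argument: replace $X$ by the $\bar L$-bundle $W=P\times_G\bar L$ over $\beta M$, which clearly carries a well-adapted psc-metric, use exactness of \eqref{eq:bordismseq} to identify the difference with a closed spin manifold $\widehat M=X\cup_{\p X}(-W)$, kill $\alpha(\widehat M)$ by APS gluing together with the hypothesis $\alpha^{\mathrm{rel}}(X,\p X,g_{\p X})=0$, apply Stolz's theorem, and finish by pushing the psc-metric through the bordism with the Gromov--Lawson method adapted to the fibered-singularity category. Treating that adapted surgery/bordism theorem as a black box is consistent with the survey, which also points to it as the main technical tool of the reference.

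The gap is in case (ii). There $L=\bH\bP^{2k}$ does not bound, so the fiber-filling construction of $M_1$ is unavailable, and your proposed substitute does not supply one: Stolz's transfer $T^{\spin}$ produces closed spin manifolds that are total spaces of $\bH\bP^2$-bundles with structure group $\PSp(3)$ (the fiber is $\bH\bP^2$, not your link $L$, and even quaternionic projective spaces are not themselves the generators of $\ker\alpha$), and it is not the transfer $t$ appearing in \eqref{eq:bordismseq}, which maps $\Omega^{\spin}_{n-\ell-1}(BG)$ into the \emph{closed} bordism group $\Omega^{\spin}_{n-1}$, not into $\Omega_n^{\spin,(L,G)\fb}$; so your claim that ``the image of the transfer map is represented by well-adapted psc pseudomanifolds'' does not yield a psc pseudomanifold with the same Bockstein as $M$. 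The paper's case (ii) runs on a genuinely different mechanism: since $\p M=\beta M\times L$ and even quaternionic projective spaces are not zero-divisors in $\Omega^{\spin}_*$, exactness of \eqref{eq:bordismseq} (namely $t\circ\beta=0$, i.e., $[\beta M]\cdot[L]=0$ in $\Omega^{\spin}_{n-1}$) forces $[\beta M\to BG]=0$ in $\Omega^{\spin}_{n-\ell-1}(BG)$; hence $[M]$ lies in the image of $i$, so $M$ is bordant in the singular category to a closed spin manifold, and the surgery technique together with the vanishing of the index invariants then produces the well-adapted psc-metric. Without this step your case (ii) does not go through as written.
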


Case (i) of this theorem holds if $L=G$ or if $L$ is a sphere or an
even complex projective space.  In these cases, the idea of the proof
is to show that $M$ can be obtained via surgery from the
disjoint union of a psc spin manifold and the result of replacing
$X$ by a $\bar L$-bundle over $\beta M$, which clearly has a well-adapted
psc-metric.  Case (ii) uses a different idea; in  this case, since
even quaternionic projective spaces are not zero-divisors in
$\Omega^{\spin}_*$, the exactness of \eqref{eq:bordismseq} forces
$[\beta M\to BG]$ to vanish in $\Omega^{\spin}_{n-\ell-1}(BG)$, and this
can be used with the surgery technique to show that $M$ admits
a well-adapted psc-metric.

The sequel paper \cite{BJP2} contains a generalization of Theorem
\ref{thm:pscsufficiency}  to the non-simply connected case, as well
as some results of the topology of the space of well-adapted psc-metrics
when this space is non-empty.  One interesting result along these lines
is the following, which is nontrivial even in the simply connected case.
\begin{theorem}[{\cite[Theorem 1.5]{BJP2}}]
\label{thm:inj}
Let $M=X\cup_{\p X}N(\beta M)$ be an $n$-dimensional compact
pseudomanifold. Assume that the fiber bundle $\phi\co \p M\to \beta M$ has
fiber $L=G/H$ and is the associated bundle to a principal $G$-bundle
over $\beta M$, and assume $M$ admits a well-adapted psc-metric.
Then the homotopy groups of the space of psc-metrics on $\beta M$
inject into the homotopy groups of the space of
well-adapted psc-metrics on $M$.
\end{theorem}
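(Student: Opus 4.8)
The plan is to realize the injection as induced by a \emph{section}, up to homotopy, of a fibration. Write $\cR^+(\beta M)$ for the space of psc-metrics on $\beta M$ and $\cR^+_{\mathrm{wa}}(M)$ for the space of well-adapted psc-metrics on $M$ in the sense of Definitions~\ref{def:adaptedmetric2} and~\ref{def:adaptedpscmetric}, and let
\[
\mathrm{res}\co\cR^+_{\mathrm{wa}}(M)\longrightarrow\cR^+(\beta M)
\]
be the map recording the metric $g_\beta$ on $\beta M$ that a well-adapted psc-metric prescribes (condition (1) of Definition~\ref{def:adaptedmetric2}). It is enough to produce, over the path-component of the metric $\hat g_\beta$ obtained by restricting the given well-adapted psc-metric $\hat g$, a homotopy section $s$ of $\mathrm{res}$: then $s_*\co\pi_k(\cR^+(\beta M))\to\pi_k(\cR^+_{\mathrm{wa}}(M))$ is a split injection, split by $\mathrm{res}_*$, which is the assertion. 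First I would identify $\cR^+_{\mathrm{wa}}(M)$ up to weak equivalence. By conditions (3)--(4) of Definition~\ref{def:adaptedmetric2} the restriction of a well-adapted metric to a slightly shrunk neighborhood of $\beta M$ is determined by $g_\beta$ up to a contractible space of choices (the horizontal distribution of $\p X\to\beta M$ and the collar interpolation), and, since the cone $c(L)$ is scalar-flat for $\kappa_L=\ell(\ell-1)$ (Lemma~\ref{lem:scalofcone}), the O'Neill formulas give a continuous map $\mathrm{lift}\co\cR^+(\beta M)\to\cR^+(\p X)$ exactly as in Proposition~\ref{prob:obstr}. Hence $\cR^+_{\mathrm{wa}}(M)$ is weakly equivalent to the pullback of the boundary-restriction map $\rho\co\cR^+(X)_{\mathrm{prod}}\to\cR^+(\p X)$ along $\mathrm{lift}$, where $\cR^+(X)_{\mathrm{prod}}$ is the space of psc-metrics on $X$ that are a product near $\p X$. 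As $\rho$ is a fibration (by the now-standard analysis of spaces of psc-metrics on manifolds with boundary), $\mathrm{res}$ is a fibration, and a section of $\mathrm{res}$ is the same as a section of $\rho$ over the image of $\mathrm{lift}$.

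The real content is then that every metric in $\mathrm{lift}(\cR^+(\beta M))$ extends, coherently in families, to a psc-metric on $X$ that is a product near $\p X$. First, the fibers of $\rho$ over this image are nonempty throughout the component of $\hat g_\beta$: the secondary obstruction $\alpha^{\mathrm{rel}}(X,\p X,g_{\p X})\in KO_n$ is the index of an APS boundary problem whose boundary operator stays invertible as long as the boundary metric has positive scalar curvature, hence is locally constant in $g_\beta$, and it vanishes at $\hat g_\beta$ since $\hat g|_X$ is a psc-metric on $X$ filling in $\mathrm{lift}(\hat g_\beta)|_{\p X}$ (Lichnerowicz). To build the section I would argue as in the proof of Theorem~\ref{thm:pscsufficiency}, but parametrically: using the bordism exact sequence~\eqref{eq:bordismseq} and interior surgeries of codimension $\ge 3$ on $X$ (which leave $\p X$ fixed), one relates $(X,\p X)$ to a model $(X_0,\p X)$ built fiberwise over $\beta M$ --- the prototype being $X_0=P\times_G\bar L\to\beta M$ when $L$ bounds a $G$-manifold $\bar L$ with a suitable psc-metric, and, in the case $L=\HP^{2m}$, a model produced from the vanishing of $[\beta M\to BG]$ forced by exactness of~\eqref{eq:bordismseq}, as in the two cases of Theorem~\ref{thm:pscsufficiency}. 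For such an $X_0$ the fiberwise O'Neill construction is a literal section $\cR^+(\beta M)\to\cR^+(X_0)_{\mathrm{prod}}$ of the corresponding $\rho_0$, and the \emph{parametrized} surgery theorem gives a homotopy equivalence $\cR^+(X)_{\mathrm{prod}}\simeq\cR^+(X_0)_{\mathrm{prod}}$ lying over $\cR^+(\p X)$; transporting the section of $\rho_0$ across it yields the section of $\rho$, hence of $\mathrm{res}$.

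Assembling the lift on $N(\beta M)$ with this section over $X$ and absorbing the contractible collar choices produces $s$, and $s_*$ is the desired split injection on every homotopy group. For the non-simply connected statement one carries along $\pi_1(\beta M)\cong\pi_1(\p X)\cong\pi_1 M$: the obstruction $\alpha^{\mathrm{rel}}$ is then valued in the $KO$-theory of the relevant group $C^*$-algebra, and the bordism and surgery inputs are replaced by their versions over $B\pi_1$, exactly as in the passage from Theorem~\ref{thm:pscsufficiency} to the main theorem of~\cite{BJP2}. I expect the main obstacle to be precisely this parametrized-surgery step: the Gromov--Lawson--Chernysh surgeries on $X$ must be performed relative to $\p X$ so that the resulting equivalence of psc-metric spaces commutes with the boundary-restriction maps and therefore carries sections to sections, and the fiberwise model $X_0$ must be produced in the required generality. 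The homotopy lifting property of $\rho$ by itself only yields surjectivity of $\mathrm{res}_*$ on $\pi_0$; on higher homotopy groups the obstruction is the connecting map into the homotopy of the fiber $\cR^+(X;\,\mathrm{lift}(g_\beta))$, and it is the surgery-theoretic reduction to $X_0$ that kills it.
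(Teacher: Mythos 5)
Your overall skeleton --- realize the injection as a section $s$ of the restriction map $\mathrm{res}\co\cR^+_{\mathrm{wa}}(M)\to\cR^+(\beta M)$, so that $\mathrm{res}_*\circ s_*=\mathrm{id}$ splits the map on homotopy groups --- is the right shape. The problem is how you build the section. Theorem \ref{thm:inj} assumes no spin structures, no simple connectivity, no dimension bound $n\ge \ell+6$, and neither of the hypotheses (i)/(ii) of Theorem \ref{thm:pscsufficiency}; its only hypothesis is that \emph{one} well-adapted psc-metric exists. Your construction imports exactly those missing hypotheses: the secondary index $\alpha^{\mathrm{rel}}$ requires spin; the bordism sequence \eqref{eq:bordismseq} and the reduction of $X$ to a fiberwise model $X_0$ require the case (i)/(ii) dichotomy, simple connectivity and the dimension bound; and the ``parametrized surgery equivalence $\cR^+(X)_{\mathrm{prod}}\simeq\cR^+(X_0)_{\mathrm{prod}}$ lying over $\cR^+(\p X)$ and carrying sections to sections'' is not available off the shelf (Chernysh and Ebert--Frenck give fibrancy of the boundary-restriction map and equivalences of spaces of psc-metrics with a \emph{fixed} boundary condition, not an equivalence over $\cR^+(\p X)$), as you yourself flag. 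There is also a local gap: local constancy and vanishing of $\alpha^{\mathrm{rel}}$ only shows the \emph{necessary} index condition holds on the component of $\hat g_\beta$; it does not make the fibers of $\rho$ nonempty except under the hypotheses of the Existence Theorem, which are not assumed here. So as written the proposal proves at best a special case of the statement.

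The idea you are missing --- and the reason the theorem can be stated in this generality (the survey does not reproduce the proof, but the argument of \cite{BJP2} is a direct geometric one of this kind) --- is that $X$ never needs to be modified at all: the given well-adapted psc-metric $\hat g$ supplies the metric on $X$ once and for all, and the section of $\mathrm{res}$ is built entirely inside a collar $\p X\times[0,T]\subset X$. Since in the standing setup $L=G/H$ with $G$ compact semisimple carries a $G$-invariant metric with $\kappa_L=\ell(\ell-1)>0$, one can shrink the fiber scale of $\p X=P\times_G L$ along the collar; once the fibers have scale $\varepsilon$, the term $\varepsilon^{-2}\kappa_L$ dominates every other curvature contribution, so one may interpolate the base metric from an \emph{arbitrary} $g_\beta\in\cR^+(\beta M)$ to the fixed $\hat g_\beta$ through the convex space of \emph{all} metrics on $\beta M$ (no positivity is needed along the linear path), then unshrink and glue to $\hat g|_X$; near $\beta M$ one uses the scalar-flat cone/O'Neill lift of $g_\beta$ as in Proposition \ref{prob:obstr}. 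This produces, continuously in compact families and with uniform choices of $\varepsilon$ and collar length, a well-adapted psc-metric whose Bockstein metric is exactly $g_\beta$, and its value at $\hat g_\beta$ is joined to $\hat g$ by an evident path in $\cR^+_{\mathrm{wa}}(M)$. Hence $\mathrm{res}$ admits a section up to basepoint-preserving homotopy on each compact family, giving the split injection on all homotopy groups with no surgery, bordism, or index theory --- which is precisely why only nonemptiness of $\cR^+_{\mathrm{wa}}(M)$ is needed.
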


\section*{Acknowledgements}
JR was partially supported by {U.S.} NSF grant number DMS-1607162.\\
BB was partially supported by Simons collaboration grant 708183.
\bibliographystyle{amsplain}
\bibliography{PSCFiberedSing}

\end{document}